\documentclass{amsart}%
\usepackage{amssymb,setspace}
\usepackage[hmarginratio=1:1]{geometry}%
\usepackage{ifpdf}
\ifpdf
  \usepackage[hyperindex,pagebackref]{hyperref}%
\else
  \expandafter\ifx\csname dvipdfm\endcsname\relax
    \usepackage[hypertex,hyperindex,pagebackref]{hyperref}
  \else
    \usepackage[dvipdfm,hyperindex,pagebackref]{hyperref}
  \fi
\fi
\theoremstyle{plain}
\newtheorem{theorem}{Theorem}[section]
\newtheorem{corollary}{Corollary}[section]
\newtheorem{lemma}{Lemma}[section]
\theoremstyle{definition}
\newtheorem{definition}{Definition}[section]
\theoremstyle{remark}
\newtheorem{remark}{Remark}[section]
\allowdisplaybreaks[4]
\numberwithin{equation}{section}
\DeclareMathOperator{\td}{d\mspace{-1mu}}

\begin{document}

\title[Integral inequalities of Hermite-Hadamard type]
{Some integral inequalities of Hermite-Hadamard type for functions whose derivatives of $\boldsymbol{n}$-th order are $\boldsymbol{(\alpha,m)}$-convex}

\author[F. Qi]{Feng Qi}
\address[Qi]{Department of Mathematics, School of Science, Tianjin Polytechnic University, Tianjin City, 300387, China; College of Mathematics, Inner Mongolia University for Nationalities, Tongliao City, Inner Mongolia Autonomous Region, 028043, China}
\email{\href{mailto: F. Qi <qifeng618@gmail.com>}{qifeng618@gmail.com}, \href{mailto: F. Qi <qifeng618@hotmail.com>}{qifeng618@hotmail.com}, \href{mailto: F. Qi <qifeng618@qq.com>}{qifeng618@qq.com}}
\urladdr{\url{http://qifeng618.wordpress.com}}

\author[M. A. Latif]{Muhammad Amer Latif}
\address[Latif]{College of Science, Department of Mathematics, University of Hail, Hail 2440, Saudi Arabia}
\email{\href{mailto: M. A. Latif <m_amer_latif@hotmail.com>}{m\_amer\_latif@hotmail.com}, \href{mailto: M. A. Latif <m.alatif@uoh.edu.sa>}{m.alatif@uoh.edu.sa}}

\author[W.-H. Li]{Wen-Hui Li}
\address[Li]{Department of Mathematics, School of Science, Tianjin Polytechnic University, Tianjin City, 300387, China}
\email{\href{mailto: W.-H. Li <wen.hui.li@foxmail.com>}{wen.hui.li@foxmail.com}, \href{mailto: W.-H. Li <wen.hui.li102@gmail.com>}{wen.hui.li102@gmail.com}}

\author[S. Hussain]{Sabir Hussain}
\address[Hussain]{Department of Mathematics, University of Engineering and Technology, Lahore, Pakistan}
\email{\href{mailto: S. Hussain <sabirhus@gmail.com>}{sabirhus@gmail.com}}

\begin{abstract}
In the paper, the authors find some new integral inequalities of Hermite-Hadamard type for functions whose derivatives of the $n$-th order are $(\alpha,m)$-convex and deduce some known results. As applications of the newly-established results, the authors also derive some inequalities involving special means of two positive real numbers.
\end{abstract}

\subjclass[2010]{Primary 26D15; Secondary 26A51, 26E60, 41A55}

\keywords{Hermite-Hadamard integral inequality; convex function; $(\alpha,m)$-convex function; differentiable function; application; mean}

\maketitle

\section{Introduction}

It is common knowledge in mathematical analysis that a function $f:I\subseteq\mathbb{R}\to \mathbb{R}$ is said to be convex on an interval $I\ne\emptyset$ if
\begin{equation}\label{convex-dfn-ineq}
f(\lambda x+(1-\lambda) y)\le \lambda f(x)+(1-\lambda )f(y)
\end{equation}
for all $x,y\in I$ and $\lambda \in [0,1]$; If the inequality~\eqref{convex-dfn-ineq} reverses, then $f$ is said to be concave on $I$.
\par
Let $f: I\subseteq\mathbb{R} \to \mathbb{R}$ be a convex function on an interval $I$ and $a, b\in I$ with $a<b$. Then
\begin{equation}\label{HH-ineq-eq}
f\biggl(\frac{a+b}2\biggl)\le\frac{1}{b-a}\int_a^bf(x)\td x\le \frac{f(a)+f(b)}2.
\end{equation}
This inequality is well known in the literature as Hermite-Hadamard integral inequality for convex functions. See~\cite{Dragomir-selected-Topic, Niculescu-Persson-Monograph-2004} and closely related references therein.
\par
The concept of usually used convexity has been generalized by a number of mathematicians. Some of them can be recited as follows.

\begin{definition}[\cite{Toader-1985-329}]
Let $f:[0,b]\to\mathbb{R}$ be a function and $m\in[0,1]$. If
\begin{equation}
 f(\lambda x +m(1-\lambda)y)\le\lambda f(x)+m(1-\lambda )f(y)
\end{equation}
holds for all $x,y\in[0,b]$ and $\lambda\in[0,1]$, then we say that $f(x)$ is $m$-convex on $[0,b]$.
\end{definition}

\begin{definition}[\cite{Mihesan-1993-Romania}]
Let $f:[0,b]\to\mathbb{R}$ be a function and $(\alpha,m)\in[0,1]\times[0,1]$. If
\begin{equation}
 f(\lambda x+m(1-\lambda)y)\le \lambda^\alpha f(x)+m(1-\lambda^\alpha)f(y)
\end{equation}
is valid for all $x,y\in[0,b]$ and $\lambda\in(0,1]$, then we say that $f(x)$ is $(\alpha, m)$-convex on $[0,b]$.
\end{definition}

It is not difficult to see that when $(\alpha,m)\in\{(\alpha,0),(1,0),(1,m),(1,1),(\alpha,1)\}$ the $(\alpha, m)$-convex function becomes the $\alpha$-star-shaped, star-shaped, $m$-convex, convex, and $\alpha$-convex functions respectively.
\par
The famous Hermite-Hadamard inequality~\eqref{HH-ineq-eq} has been refined or generalized by many mathematicians. Some of them can be reformulated as follows.

\begin{theorem}[{\cite[Theorem~3]{M.E.-2010-1065}}]
Let $f:I^\circ\subset[0,\infty)\to\mathbb{R}$ be a twice differentiable function such that $f''\in L([a,b])$ for $a,b\in I$ with $a<b$. If $|f''(x)|^{q}$ is $m$-convex on $[a,b]$ for some fixed $q>1$ and $m\in[0,1]$, then
\begin{equation}
\biggl|f\biggl(\frac{a+b}{2}\biggr)-\frac{1}{b-a} \int_{a}^{b}f(x)\td x\biggr|
\le \frac{(b-a)^{2}}{8}\biggl[\frac{\Gamma(1+p)}{\Gamma(3/2+p)}\biggr]^{1/p} \biggl[\frac{|f''(a)|^{q}+m|f''(b/m)|^{q}}{2}\biggr]^{1/q},
\end{equation}
where $\frac1p+\frac1q=1$ and $\Gamma$ is the classical Euler gamma function which may be defined for $\Re(z)>0$ by
\begin{equation}
\Gamma(z)=\int_0^{\infty}t^{z-1}e^{-t}\td t.
\end{equation}
\end{theorem}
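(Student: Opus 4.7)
The plan is to express the midpoint error $f((a+b)/2) - \frac{1}{b-a}\int_a^b f(x)\td x$ as a single integral against $f''$, then apply Hölder's inequality and invoke the $m$-convexity of $|f''|^q$ to control the integrand.

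First, I would derive an integral identity. Writing Taylor's formula with integral remainder around $(a+b)/2$ and integrating over $[a,b]$ (equivalently, two integrations by parts), one obtains
\[
f\biggl(\frac{a+b}{2}\biggr) - \frac{1}{b-a}\int_a^b f(x)\td x = (b-a)^2\int_0^1 K(t)\, f''(ta+(1-t)b)\,\td t,
\]
where $K$ is an explicit symmetric, piecewise-polynomial kernel on $[0,1]$ vanishing at the endpoints (the standard midpoint-kernel, built from $t^2$ on $[0,1/2]$ and $(1-t)^2$ on $[1/2,1]$). This converts the awkward "midpoint vs.\ average" difference into a clean pairing against $f''$.

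Second, taking absolute values and applying Hölder's inequality with conjugate exponents $p,q$ splits the bound into two independent factors:
\[
\int_0^1 |K(t)|\,|f''(ta+(1-t)b)|\,\td t \le \biggl(\int_0^1 |K(t)|^p\,\td t\biggr)^{1/p}\biggl(\int_0^1 |f''(ta+(1-t)b)|^q\,\td t\biggr)^{1/q}.
\]
The first factor is a purely deterministic beta-type integral; after the substitution that maps the piecewise kernel onto $[0,1]$ and an application of $B(x,y)=\Gamma(x)\Gamma(y)/\Gamma(x+y)$, it collapses to a constant multiple of $\Gamma(1+p)/\Gamma(3/2+p)$. Combined with the $(b-a)^2$ prefactor from step one, the numerical constants reduce to the advertised $(b-a)^2/8$.

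Third, for the $L^q$-type factor I would use $m$-convexity of $|f''|^q$. Writing $ta+(1-t)b = ta + m(1-t)(b/m)$, the definition yields
\[
|f''(ta+(1-t)b)|^q \le t\,|f''(a)|^q + m(1-t)\,|f''(b/m)|^q,
\]
and a direct computation gives $\int_0^1\bigl[t\,|f''(a)|^q + m(1-t)\,|f''(b/m)|^q\bigr]\td t = \frac{|f''(a)|^q + m|f''(b/m)|^q}{2}$. Substituting into the Hölder bound yields the desired inequality. The main obstacle is the beta/gamma bookkeeping in step two: one must fix the normalization of $K$ so that the powers of $2$ line up with $(b-a)^2/8$ and so that the elementary integral genuinely converts to the ratio $\Gamma(1+p)/\Gamma(3/2+p)$; once that is in hand, the remaining work is mechanical.
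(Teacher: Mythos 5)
First, a point of reference: the paper never proves this statement --- it is Theorem~1.1, quoted as background from the cited work of \"Ozdemir, Avci, and Set --- so your attempt can only be compared with the standard argument from that source, whose skeleton (midpoint kernel identity, then H\"older, then $m$-convexity written as $ta+(1-t)b=ta+m(1-t)(b/m)$) you have reproduced correctly. Your first and third steps are sound: the identity you want is
\begin{equation*}
\frac{1}{b-a}\int_a^b f(x)\td x-f\biggl(\frac{a+b}{2}\biggr)=(b-a)^2\int_0^1K(t)\,f''(ta+(1-t)b)\td t,\qquad
K(t)=\begin{cases} t^2/2, & 0\le t\le\tfrac12,\\ (1-t)^2/2, & \tfrac12\le t\le1,\end{cases}
\end{equation*}
and your averaging step $\int_0^1\bigl[t|f''(a)|^q+m(1-t)|f''(b/m)|^q\bigr]\td t=\frac{|f''(a)|^q+m|f''(b/m)|^q}{2}$ is exactly right (that $b/m$ falls outside $[a,b]$ is an imprecision of the statement itself, inherited from the source, not something you introduced).

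The genuine gap is in your second step. For the kernel above the H\"older factor is an elementary integral, not a Beta integral:
\begin{equation*}
\int_0^1|K(t)|^p\td t=\frac1{2^p}\biggl[\int_0^{1/2}t^{2p}\td t+\int_{1/2}^1(1-t)^{2p}\td t\biggr]=\frac1{8^p(2p+1)},
\end{equation*}
and $\frac1{2p+1}$ is \emph{not} a constant multiple of $\frac{\Gamma(1+p)}{\Gamma(3/2+p)}$; the claimed ``collapse'' to the Gamma ratio simply does not happen, so as written your outline never arrives at the stated right-hand side. There are two clean repairs. (i) Majorize the kernel pointwise: $t^2\le t(1-t)$ on $[0,\tfrac12]$ and $(1-t)^2\le t(1-t)$ on $[\tfrac12,1]$, so that $\int_0^1|K|^p\td t\le 2^{-p}B(p+1,p+1)=2^{-p}\Gamma(p+1)^2/\Gamma(2p+2)$; then --- and this is the ingredient missing from your sketch, since $B(x,y)=\Gamma(x)\Gamma(y)/\Gamma(x+y)$ alone leaves you at $\Gamma(p+1)^2/\Gamma(2p+2)$, which is not the advertised ratio --- apply the Legendre duplication formula $\Gamma(p+1)\Gamma\bigl(p+\tfrac32\bigr)=2^{-2p-1}\sqrt{\pi}\,\Gamma(2p+2)$ to rewrite this as $\frac{\sqrt{\pi}}{2^{3p+1}}\frac{\Gamma(1+p)}{\Gamma(3/2+p)}$; raising to the power $1/p$ and using $(\sqrt{\pi}/2)^{1/p}\le1$ gives exactly $\frac18\bigl[\frac{\Gamma(1+p)}{\Gamma(3/2+p)}\bigr]^{1/p}$. (ii) Alternatively, keep your exact value $\frac18(2p+1)^{-1/p}$ and prove the comparison $\frac1{2p+1}\le\frac{\Gamma(1+p)}{\Gamma(3/2+p)}$, for instance from $\frac{\Gamma(1+p)}{\Gamma(3/2+p)}=\frac{2}{\sqrt{\pi}}\int_0^1(1-t^2)^p\td t\ge\frac{2}{\sqrt{\pi}}\int_0^1(1-t)^{2p}\td t=\frac{2}{\sqrt{\pi}\,(2p+1)}>\frac1{2p+1}$; this shows your route actually yields a constant \emph{sharper} than the stated one, from which the theorem follows. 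Either bridge is short, but one of them is indispensable.
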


\begin{theorem}[{\cite[Theorem~4]{Sarikaya-Aktan-1005.2897}}]
Let $I\subseteq\mathbb{R}$ be an open interval and $a,b\in I$ with $a<b$, and let $f:I\to\mathbb{R}$ be a twice differentiable mapping such that $f''(x)$ is integrable. If $0\le\lambda\le1$ and  $|f''(x)|$ is convex on $[a,b]$, then
\begin{multline}
\biggl|(\lambda-1)f\biggl(\frac{a+b}2\biggr)-\lambda\frac{f(a)+f(b)}2+\int_a^bf(x)\td x\biggr|\\
\le
\begin{cases}
\begin{aligned}
\frac{(b-a)^2}{24}\biggl\{\biggl[\lambda^4&+(1+\lambda)(1-\lambda)^3+\frac{5\lambda-3}4\biggr]|f''(a)|\\ &+\biggl[\lambda^4+(2-\lambda)\lambda^3+\frac{1-3\lambda}4\biggr]|f''(b)|\biggr\}, \quad 0\le\lambda\le\dfrac12;
\end{aligned}\\
\dfrac{(b-a)^2}{48}(3\lambda-1)\bigl(|f''(a)|+|f''(b)|\bigr),\quad \dfrac12\le\lambda\le1.
\end{cases}
\end{multline}
\end{theorem}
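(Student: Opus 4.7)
The plan is to use the now-standard two-step method for Hermite--Hadamard type inequalities involving twice differentiable integrands. The first step is to establish a supporting integral identity rewriting the quantity
\[
(\lambda-1)f\!\left(\frac{a+b}{2}\right)-\lambda\frac{f(a)+f(b)}{2}+\int_{a}^{b}f(x)\,\td x
\]
as a weighted integral of $f''$ of the form
\[
(b-a)^{\kappa}\int_{0}^{1}K(t,\lambda)\,f''\bigl(ta+(1-t)b\bigr)\,\td t,
\]
where $K(t,\lambda)$ is a continuous piecewise-polynomial kernel on $[0,1/2]\cup[1/2,1]$, tailored so that integrating by parts twice against $f''$ recovers exactly the three pointwise values $f(a)$, $f(b)$, $f\bigl((a+b)/2\bigr)$ together with $\int_a^b f$. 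A natural ansatz is that $K(t,\lambda)$ involves the factors $t$ and $t-\lambda/2$ on $[0,1/2]$ and the mirror-image factors $(1-t)$ and $(1-t)-\lambda/2$ on $[1/2,1]$; matching and vanishing at $t=0,1/2,1$ will ensure that the $f'$ boundary terms produced by the first integration by parts cancel, while the second produces the required combination.

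The second step is routine. Pass the absolute value inside the integral via the triangle inequality and invoke convexity of $|f''|$ to bound
\[
|f''(ta+(1-t)b)|\le t\,|f''(a)|+(1-t)\,|f''(b)|.
\]
This reduces the problem to evaluating the four coefficient integrals $\int_{0}^{1}|K(t,\lambda)|\,t\,\td t$ and $\int_{0}^{1}|K(t,\lambda)|(1-t)\,\td t$, each naturally split at $t=1/2$.

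The dichotomy between $0\le\lambda\le 1/2$ and $1/2\le\lambda\le 1$ in the statement traces back exactly to the sign behaviour of $K(t,\lambda)$. For $\lambda\ge 1/2$ the factor $t-\lambda/2$ does not change sign on $[0,1/2]$ (and symmetrically on $[1/2,1]$), so $|K|$ coincides with $\pm K$ on each half and the resulting polynomial integrals collapse, by the symmetry $a\leftrightarrow b$, to the clean bound $\tfrac{(b-a)^{2}}{48}(3\lambda-1)\bigl(|f''(a)|+|f''(b)|\bigr)$. For $\lambda<1/2$ the factor changes sign at the interior point $t=\lambda/2\in(0,1/2)$, forcing an additional splitting of the integration domain; this is precisely what produces the fourth-degree polynomials $\lambda^{4}+(1+\lambda)(1-\lambda)^{3}+\tfrac{5\lambda-3}{4}$ and $\lambda^{4}+(2-\lambda)\lambda^{3}+\tfrac{1-3\lambda}{4}$ on the right-hand side of the first case.

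The main obstacle will be pinning down the correct kernel $K(t,\lambda)$ up to normalisation and then carrying the bookkeeping through the $\lambda\le 1/2$ sub-case without arithmetic slips, since that is where the extra sign change inside $[0,1/2]$ multiplies the number of sub-integrals. Once $K$ is guessed, verifying the identity and evaluating the coefficient integrals is entirely mechanical calculus; the case $\lambda\ge 1/2$ should drop out as a short sanity check before tackling the more delicate case.
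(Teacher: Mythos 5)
First, some context your blind attempt could not have known: this paper never proves the statement at all --- it is quoted in the introduction as known background, attributed to Sarikaya and Aktan --- so the only proof to measure against is the one in that source, and your two-step plan (a piecewise-kernel integral identity, then the triangle inequality plus the convexity bound $|f''(ta+(1-t)b)|\le t|f''(a)|+(1-t)|f''(b)|$, with the case split at $\lambda=1/2$ governed by the sign pattern of the kernel) is indeed the skeleton of that proof. But as a proof your proposal has genuine gaps. All of the mathematical content --- the kernel itself, the verification of the identity, and the integral evaluations producing the stated quartics and the constant $(3\lambda-1)/48$ --- is deferred, so nothing is actually established. Worse, the one concrete guess you make is wrong and contradicts your own case analysis: the factor $t-\lambda/2$ vanishes at $t=\lambda/2$, which lies in the interior of $[0,1/2]$ for \emph{every} $\lambda\in(0,1)$, so with your ansatz the sign change never leaves $[0,1/2]$ and the dichotomy at $\lambda=1/2$ that you attribute to it cannot arise. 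The kernel that actually works, paired with the \emph{normalized} average $\frac1{b-a}\int_a^bf(x)\td x$ on the left-hand side, is
\[
K(t,\lambda)=
\begin{cases}
\frac12\,t(t-\lambda), & t\in[0,1/2],\\
\frac12\,(1-t)(1-\lambda-t), & t\in(1/2,1],
\end{cases}
\]
whose interior zero $t=\lambda$ exits $[0,1/2]$ precisely when $\lambda\ge1/2$.

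Second, and decisively: the statement as reproduced in this paper cannot be proved, because it is false --- it is a corrupted quotation of the source theorem, and a faithful execution of your plan would refute rather than prove it. (i) The integral on the left must carry the factor $\frac1{b-a}$: taking $f\equiv1$ makes the left-hand side $|b-a-1|$ while the right-hand side is $0$. (ii) Even after restoring that factor, the first branch is wrong. Both bracketed expressions collapse to the same polynomial,
\[
\lambda^4+(1+\lambda)(1-\lambda)^3+\frac{5\lambda-3}4
=\lambda^4+(2-\lambda)\lambda^3+\frac{1-3\lambda}4
=2\lambda^3-\frac{3\lambda}4+\frac14,
\]
whereas carrying out your two-step method with the kernel above gives, for $0\le\lambda\le1/2$, the coefficient $\frac{(b-a)^2}{24}\bigl(4\lambda^3-\frac{3\lambda}2+\frac12\bigr)$ for each of $|f''(a)|$ and $|f''(b)|$ --- exactly twice the stated value. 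Concretely, $f(x)=x^3$ on $[0,1]$ with $\lambda=0$ has left-hand side $\bigl|-f\bigl(\frac12\bigr)+\int_0^1x^3\td x\bigr|=\frac18$, while the stated bound is $\frac1{24}\cdot\frac14\cdot 6=\frac1{16}$. A sanity check that should have been built into your plan exposes this immediately: the two branches must agree at $\lambda=1/2$ by continuity, but as stated they give $\frac{(b-a)^2}{192}$ versus $\frac{(b-a)^2}{96}$ per unit of $|f''(a)|+|f''(b)|$. The second branch $\frac{(b-a)^2}{48}(3\lambda-1)\bigl(|f''(a)|+|f''(b)|\bigr)$, by contrast, is exactly what the kernel computation yields for $\lambda\ge1/2$.
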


\begin{theorem}[{\cite[Theorem~3]{M.E.-2011-2614}}]\label{thm1.3}
Let $b^*>0$ and $f:[0,b^*]\to\mathbb{R}$ be a twice differentiable function such that $f''\in L([a,b])$ for $a,b\in[0,b^*]$ with $a<b$. If $|f''(x)|^{q}$ is $(\alpha,m)$-convex on $[a,b]$ for $(\alpha,m)\in[0,1]\times[0,1]$ and $q\ge1$, then
\begin{multline}\label{thm1.3-n=2}
\biggl|\frac{f(a)+f(mb)}2-\frac{1}{mb-a} \int_{a}^{mb}f(x)\td x\biggr|\\
\le \frac{(mb-a)^2}2\biggl(\frac1{6}\biggr)^{1-1/q}
\biggl\{\frac{|f''(a)|^{q}}{(\alpha+2)(\alpha+3)}+m|f''(b)|^{q} \biggl[\frac16-\frac1{(\alpha+2)(\alpha+3)}\biggr]\biggr\}^{1/q}.
\end{multline}
\end{theorem}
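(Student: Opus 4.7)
The plan is to establish the estimate via (i) an integral identity expressing the left-hand side of~\eqref{thm1.3-n=2} as a weighted integral of $f''$, (ii) the power-mean inequality with weight $t(1-t)$, and (iii) the hypothesis of $(\alpha,m)$-convexity of $|f''|^q$ to reduce matters to two elementary integrals.

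The first step---really the crux---is the identity
\begin{equation*}
\frac{f(a)+f(mb)}{2}-\frac{1}{mb-a}\int_a^{mb}f(x)\td x
=\frac{(mb-a)^2}{2}\int_0^1 t(1-t)f''\bigl(ta+m(1-t)b\bigr)\td t,
\end{equation*}
which I would state and prove as a preliminary lemma by two integrations by parts applied to the right-hand side (differentiating the polynomial $t(1-t)$ and antidifferentiating $f''$ each time), followed by the substitution $x=ta+m(1-t)b$ in the surviving single integral of $f$. The argument of $f''$ is deliberately written in the form $\lambda x+m(1-\lambda)y$ with $\lambda=t$, $x=a$, $y=b$, so that the $(\alpha,m)$-convexity hypothesis applies directly in the next step.

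Given the identity, one takes absolute values, pushes the modulus inside the integral, and applies the power-mean inequality against the weight $t(1-t)$; since $\int_0^1 t(1-t)\td t=\tfrac16$, this produces the factor $(1/6)^{1-1/q}$ appearing in the theorem. Applying the $(\alpha,m)$-convexity of $|f''|^q$ then gives
\begin{equation*}
\bigl|f''(ta+m(1-t)b)\bigr|^q\le t^\alpha|f''(a)|^q+m(1-t^\alpha)|f''(b)|^q,
\end{equation*}
and the remaining integral splits into $\int_0^1 t^{\alpha+1}(1-t)\td t=\frac1{(\alpha+2)(\alpha+3)}$ together with its complement $\int_0^1 t(1-t)\td t-\int_0^1 t^{\alpha+1}(1-t)\td t=\frac16-\frac1{(\alpha+2)(\alpha+3)}$. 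Assembling the pieces reproduces the right-hand side of~\eqref{thm1.3-n=2}.

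The main obstacle I anticipate is verifying the integral identity with the correct constants, since the presence of the parameter $m$ makes the interval of integration $[a,mb]$ of length $mb-a$ rather than $b-a$, and one must keep track of the change of variables so that the boundary terms produced by the two integrations by parts combine into $\frac{f(a)+f(mb)}{2}$ (and not $\frac{f(a)+f(b)}{2}$). Once that identity is in place, the remainder of the argument is routine: a single application of the power-mean inequality and two elementary beta-type integral evaluations.
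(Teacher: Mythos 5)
Your proposal is correct and follows essentially the same route as the paper: the paper proves this statement as the $n=2$ case of its Lemma~\ref{lem2.1} (whose $n=2$ instance is exactly your identity~\eqref{eq2.1-n=2}) combined with the proof of Theorem~\ref{th3.1}, i.e.\ the weighted power-mean (H\"older) inequality followed by the $(\alpha,m)$-convexity estimate and the same two beta-type integrals. The only cosmetic difference is that the paper's general weight $t^{n-1}(n-2t)$ equals $2t(1-t)$ at $n=2$, so it produces the factor $(1/3)^{1-1/p}$ with a compensating constant rather than your $(1/6)^{1-1/q}$ directly; the two bounds coincide after simplification.
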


In recent years, some other kinds of Hermite-Hadamard type inequalities were generated in~\cite{Hadramard-Convex-Xi-Filomat.tex, H-H-Bai-Wang-Qi-2012.tex, chun-ling-Hermite.tex, difference-hermite-hadamard.tex, GMJ-2013-062.tex, 130-2014-Shuang-Wang-Qi-JOCAAA-2-10-2014.tex, Xi-Bai-Qi-Hadamard-2011-AEQMath.tex, H-H-h-C-Xi-Qi-AIA.tex, Hadramard-Convex-Xi-September-2011.tex}, for example. For more systematic information, please refer to monographs~\cite{Dragomir-selected-Topic, Niculescu-Persson-Monograph-2004} and related references therein.
\par
In this paper, we will establish some new inequalities of Hermite-Hadamard type for functions whose derivatives of $n$-th order are $(\alpha,m)$-convex and deduce some known results in the form of corollaries.

\section{A lemma}

For establishing new integral inequalities of Hermite-Hadamard type for functions whose derivatives of $n$-th order are $(\alpha,m)$-convex, we need the following lemma.

\begin{lemma}\label{lem2.1}
Let $0<m\le1$ and $b>a>0$ satisfying $a<mb$.
If $f^{(n)}(x)$ for $n\in\{0\}\cup\mathbb{N}$ exists and is integrable on the closed interval $[0,b]$, then
\begin{multline}\label{eq2.1}
\frac{f(a)+f(mb)}{2} -\frac{1}{mb-a}\int_{a}^{mb}f(x)\td x -\frac12\sum_{k=2}^{n-1}\frac{(k-1)(mb-a)^k}{(k+1)!}f^{(k)}(a) \\*
=\frac12\frac{(mb-a)^n}{n!}\int_{0}^1t^{n-1}(n-2t)f^{(n)}(ta+m(1-t)b)\td t,
\end{multline}
where the sum above takes $0$ when $n=1$ and $n=2$.
\end{lemma}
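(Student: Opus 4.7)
The plan is to prove the identity~\eqref{eq2.1} by induction on $n\ge 1$, pivoting on a one-step integration-by-parts recursion for the integral on the right. Writing
\[
J_n:=\int_0^1 t^{n-1}(n-2t)\,f^{(n)}(ta+m(1-t)b)\,\td t,
\]
the claim takes the form $\mathrm{LHS}_n=\frac{(mb-a)^n}{2\,n!}J_n$, where $\mathrm{LHS}_n$ denotes the entire left-hand side of~\eqref{eq2.1}.

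First I would derive the recursion by integrating $J_n$ by parts once, taking $u=t^{n-1}(n-2t)$ and $\td v=f^{(n)}(ta+m(1-t)b)\,\td t$. Then $\td u=n\,t^{n-2}[(n-1)-2t]\,\td t$ and $v=\frac{1}{a-mb}f^{(n-1)}(ta+m(1-t)b)$; for $n\ge 2$ the boundary contribution vanishes at $t=0$ and equals $\frac{n-2}{a-mb}f^{(n-1)}(a)$ at $t=1$, while the remaining integral factors as $-\frac{n}{a-mb}J_{n-1}$. Clearing denominators gives
\[
(mb-a)J_n=n\,J_{n-1}-(n-2)\,f^{(n-1)}(a),\qquad n\ge 2.
\]

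For the base case $n=1$, the same integration by parts applied to $J_1=\int_0^1(1-2t)f'(ta+m(1-t)b)\,\td t$, this time keeping boundary values at both endpoints, followed by the substitution $x=ta+m(1-t)b$ in the leftover integral, produces
\[
\frac{mb-a}{2}\,J_1=\frac{f(a)+f(mb)}{2}-\frac{1}{mb-a}\int_a^{mb}f(x)\,\td x,
\]
which is exactly~\eqref{eq2.1} at $n=1$ under the empty-sum convention.

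In the inductive step from $n-1$ to $n$, observe that $\mathrm{LHS}_n-\mathrm{LHS}_{n-1}=-\frac12\cdot\frac{(n-2)(mb-a)^{n-1}}{n!}f^{(n-1)}(a)$, namely the single new $k=n-1$ summand. Inserting the inductive hypothesis $\mathrm{LHS}_{n-1}=\frac{(mb-a)^{n-1}}{2(n-1)!}J_{n-1}$ and factoring out $\frac{(mb-a)^{n-1}}{2\,n!}$ reduces the target identity $\mathrm{LHS}_n=\frac{(mb-a)^n}{2\,n!}J_n$ to precisely the recursion $(mb-a)J_n=n\,J_{n-1}-(n-2)f^{(n-1)}(a)$ established above. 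The only real obstacle is bookkeeping: honoring the empty-sum convention for $n\in\{1,2\}$ (consistently, the factor $n-2$ automatically annihilates the would-be new term at $n=2$) and tracking the sign flip $(a-mb)^{-1}=-(mb-a)^{-1}$ generated at each integration by parts. No analytic machinery beyond one integration by parts and one linear change of variable is needed.
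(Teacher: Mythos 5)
Your proof is correct and follows essentially the same route as the paper's: a single integration by parts on the right-hand integral yields the recursion $(mb-a)J_n=nJ_{n-1}-(n-2)f^{(n-1)}(a)$, which is exactly the paper's recurrence $S_{a,mb}(n)=-T_{a,mb}(n-1)+S_{a,mb}(n-1)$ after multiplying through by $\frac{(mb-a)^{n-1}}{2\,n!}$, and induction on $n$ finishes the argument. If anything, your version is tidier and more self-contained: you verify the base case $n=1$ directly and run one uniform induction valid for all $n\ge2$ (noting that the factor $n-2$ kills the would-be new summand at $n=2$), whereas the paper treats $n=1$, $n=2$, and $n=3$ as separate cases, citing an external reference for $n=2$, and invokes its recurrence only for $n\ge4$.
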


\begin{proof}
When $n=1$, it is easy to deduce the identity~\eqref{eq2.1} by performing an integration by parts in the integrals from the right side and changing the variable.
\par
When $n=2$, we have
\begin{equation}\label{eq2.1-n=2}
\frac{f(a)+f(mb)}{2}-\frac{1}{mb-a}\int_{a}^{mb}f(x)\td x
=\frac{(mb-a)^2}2\int_{0}^1t(1-t)f''(ta+m(1-t)b)\td t.
\end{equation}
This result is same as~\cite[Lemma~2]{M.E.-2011-2614}.
\par
When $n=3$, the identity~\eqref{eq2.1} is equivalent to
\begin{multline}\label{eq2.1-n=3}
\frac{f(a)+f(mb)}{2}-\frac{1}{mb-a}\int_{a}^{mb}f(x)\td x-\frac{(mb-a)^2}{12}f''(a)\\
=\frac{(mb-a)^3}{12}\int_{0}^1t^2(3-2t)f^{(3)}(ta+m(1-t)b)\td t,
\end{multline}
which may be derived from integrating the integral in the second line of~\eqref{eq2.1-n=3} and utilizing the identity~\eqref{eq2.1-n=2}.
\par
When $n\ge4$, computing the second line in~\eqref{eq2.1} by integration by parts yields
\begin{multline*}
\frac{(mb-a)^n}{n!}\int_{0}^1 t^{n-1}(n-2t)f^{(n)}(ta+m(1-t)b)\td t\\
=-\frac{(n-2)(mb-a)^{n-1}}{n!}f^{(n-1)}(a)
+\frac{(mb-a)^{n-1}}{(n-1)!} \int_{0}^1t^{n-2}(n-1-2t)f^{(n-1)}(ta+m(1-t)b)\td t,
\end{multline*}
which is a recurrent formula
\begin{equation*}
S_{a,mb}(n)=-T_{a,mb}(n-1)+S_{a,mb}(n-1)
\end{equation*}
on $n$, where
\begin{equation*}
S_{a,mb}(n)=\frac12\frac{(mb-a)^n}{n!}\int_{0}^1 t^{n-1}(n-2t)f^{(n)}(ta+m(1-t)b)\td t
\end{equation*}
and
\begin{equation*}
T_{a,mb}(n-1)=\frac12\frac{(n-2)(mb-a)^{n-1}}{n!}f^{(n-1)}(a)
\end{equation*}
for $n\ge4$. By mathematical induction, the proof of Lemma~\ref{lem2.1} is complete.
\end{proof}

\begin{remark}
Similar integral identities to~\eqref{eq2.1}, produced by replacing $f^{(k)}(a)$ in~\eqref{eq2.1} by $f^{(k)}(b)$ or by $f^{(k)}\bigl(\frac{a+b}2\bigr)$, and corresponding integral inequalities of Hermite-Hadamard type have been established in~\cite{H-H-(a-m)-convex-Filomat.tex, Wang-Qi-MIA3459-MINFAA2012.tex, Wang-Ineq-H-H-type-Analysis.tex}.
\end{remark}

\begin{remark}
When $m=1$, our Lemma~\ref{lem2.1} becomes~\cite[Lemma~2.1]{Hwang-Kyugpook-03}.
\end{remark}

\section{Inequalities of Hermite-Hadamard type}

Now we are in a position to establish some integral inequalities of Hermite-Hadamard type for functions whose derivatives of $n$-th order are $(\alpha,m)$-convex.

\begin{theorem}\label{th3.1}
Let $(\alpha,m)\in[0,1]\times(0,1]$ and $b>a>0$ with $a<mb$.
If $f(x)$ is $n$-time differentiable on $[0,b]$ such that $\bigl|f^{(n)}(x)\bigr|\in L([0,mb])$ and $\bigl|f^{(n)}(x)\bigr|^p$ is $(\alpha,m)$-convex on $[0,mb]$ for $n\ge 2$ and $p\ge 1$, then
\begin{multline}\label{eq3.1.1}
\biggl|\frac{f(a)+f(mb)}{2}-\frac{1}{mb-a}\int_{a}^{mb}f(x)\td x -\frac12\sum_{k=2}^{n-1}\frac{(k-1)(mb-a)^k}{(k+1)!}f^{(k)}(a)\biggr|\\*
\le\frac12\frac{(mb-a)^n}{n!}\biggl(\frac{n-1}{n+1}\biggr)^{1-1/p} \biggl\{\frac{n(n-1)+\alpha(n-2)}{(n+\alpha)(n+\alpha+1)}\bigl|f^{(n)}(a)\bigr|^p\\ +m\biggl[\frac{n-1}{n+1}-\frac{n(n-1)+\alpha(n-2)} {(n+\alpha)(n+\alpha+1)}\biggr]\bigl|f^{(n)}(b)\bigr|^p\biggr\}^{1/p},
\end{multline}
where the sum above takes $0$ when $n=2$.
\end{theorem}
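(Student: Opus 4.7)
The plan is to take absolute values on both sides of the identity \eqref{eq2.1} from Lemma~\ref{lem2.1} and then run the standard power-mean/convexity argument. Writing $w(t) := t^{n-1}(n-2t)$ for the weight appearing in the right-hand integral of \eqref{eq2.1}, observe that for $n\ge 2$ and $t\in[0,1]$ we have $n-2t \ge n-2 \ge 0$, so $w(t)\ge 0$ on $[0,1]$. This nonnegativity is the only place where the hypothesis $n\ge 2$ enters, and it lets the absolute value pass inside the integral.

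First I would apply, for $p\ge 1$, the weighted power-mean (i.e.\ Hölder with exponents $p$ and $p/(p-1)$, or trivially for $p=1$) to obtain
\begin{equation*}
\int_0^1 w(t)\bigl|f^{(n)}(ta+m(1-t)b)\bigr|\,\td t
\le \biggl(\int_0^1 w(t)\,\td t\biggr)^{1-1/p}
\biggl(\int_0^1 w(t)\bigl|f^{(n)}(ta+m(1-t)b)\bigr|^p\,\td t\biggr)^{1/p}.
\end{equation*}
Next I would invoke the $(\alpha,m)$-convexity of $\bigl|f^{(n)}\bigr|^p$, which gives the pointwise bound
\begin{equation*}
\bigl|f^{(n)}(ta+m(1-t)b)\bigr|^p \le t^\alpha \bigl|f^{(n)}(a)\bigr|^p + m(1-t^\alpha)\bigl|f^{(n)}(b)\bigr|^p.
\end{equation*}
Substituting this bound reduces everything to the two elementary moments of $w$, namely
\begin{equation*}
\int_0^1 w(t)\,\td t = \frac{n-1}{n+1}, \qquad
\int_0^1 t^\alpha w(t)\,\td t = \frac{n(n-1)+\alpha(n-2)}{(n+\alpha)(n+\alpha+1)},
\end{equation*}
both of which are direct antiderivative calculations. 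The coefficient of $m\bigl|f^{(n)}(b)\bigr|^p$ is then $\int_0^1 w(t)(1-t^\alpha)\,\td t$, i.e.\ the difference of the two displayed values, which reproduces the bracketed expression in \eqref{eq3.1.1}.

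There is no real obstacle here; the argument is a template once Lemma~\ref{lem2.1} is available. The only points that need a little care are (i) checking that $w(t)\ge 0$ so that the absolute value commutes with integration (and so that $w$ is a legitimate weight in the power-mean step), and (ii) computing the second moment correctly so that the algebraic form $\frac{n(n-1)+\alpha(n-2)}{(n+\alpha)(n+\alpha+1)}$ emerges. Multiplying the resulting bound on the integral by the prefactor $\tfrac12 (mb-a)^n/n!$ from \eqref{eq2.1} produces exactly \eqref{eq3.1.1}, completing the proof.
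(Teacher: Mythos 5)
Your proposal is correct and follows essentially the same route as the paper: take absolute values in the identity of Lemma~\ref{lem2.1}, apply the weighted H\"older (power-mean) inequality with the nonnegative weight $t^{n-1}(n-2t)$, invoke the $(\alpha,m)$-convexity of $\bigl|f^{(n)}\bigr|^p$ pointwise, and evaluate the two moments $\int_0^1 t^{n-1}(n-2t)\,\td t=\frac{n-1}{n+1}$ and $\int_0^1 t^{n+\alpha-1}(n-2t)\,\td t=\frac{n(n-1)+\alpha(n-2)}{(n+\alpha)(n+\alpha+1)}$. The only cosmetic difference is that the paper treats $p=1$ and $p>1$ as separate cases, while you absorb $p=1$ as the trivial instance of the power-mean step.
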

\begin{proof}
It follows from Lemma~\ref{eq2.1} that
\begin{multline}\label{eq3.1.2}
\biggl|\frac{f(a)+f(mb)}{2}-\frac{1}{mb-a}\int_{a}^{mb}f(x)\td x -\frac12\sum_{k=2}^{n-1}\frac{(k-1)(mb-a)^k}{(k+1)!}f^{(k)}(a)\biggr|\\
\le \frac12\frac{(mb-a)^n}{n!}\int_{0}^1t^{n-1}(n-2t)\bigl|f^{(n)}(ta+m(1-t)b)\bigr|\td t.
\end{multline}
\par
When $p=1$, since $\bigl|f^{(n)}(x)\bigr|$ is $(\alpha,m)$-convex, we have
\begin{equation*}
\bigl|f^{(n)}(ta+m(1-t)b)\bigr|\le t^\alpha\bigl|f^{(n)}(a)\bigr|+m(1-t^\alpha)\bigl|f^{(n)}(b)\bigr|.
\end{equation*}
Multiplying by the factor $t^{n-1}(n-2t)$ on both sides of the above inequality and integrating with respect to $t\in[0,1]$ lead to
\begin{align*}
&\quad\int_{0}^1 t^{n-1}(n-2t)\bigl|f^{(n)}(ta+m(1-t)b)\bigr|\td t\\
&\le\int_{0}^1t^{n-1}(n-2t)\bigl[t^{\alpha}\bigl|f^{(n)}(a)\bigr| +m(1-t^{\alpha})\bigl|f^{(n)}(b)\bigr|\bigr]\td t\\
&=\bigl|f^{(n)}(a)\bigr|\int_{0}^1 t^{n+\alpha-1}(n-2t)\td t +m\bigl|f^{(n)}(b)\bigr|\int_{0}^1t^{n-1}(n-2t)(1-t^{\alpha})\td t\\
&=\biggl(\frac{n}{n+\alpha}-\frac2{n+\alpha+1}\biggr)\bigl|f^{(n)}(a)\bigr|
+m\bigl|f^{(n)}(b)\bigr|\biggl(\frac{n-1}{n+1}-\frac{n}{n+\alpha}+\frac2{n+\alpha+1}\biggr)\\
&=\frac{n(n-1)+\alpha(n-2)}{(n+\alpha)(n+\alpha+1)}\bigl|f^{(n)}(a)\bigr| +m\biggl[\frac{n-1}{n+1}-\frac{n(n-1)+\alpha(n-2)}{(n+\alpha)(n+\alpha+1)}\biggr]\bigl|f^{(n)}(b)\bigr|.
\end{align*}
The proof for the case $p=1$ is complete.
\par
When $p>1$, by the well-known H\"older integral inequality, we obtain
\begin{multline}\label{eq3.1.3}
\int_{0}^1t^{n-1}(n-2t)\bigl|f^{(n)}(ta+m(1-t)b)\bigr|\td t\\*
\le\biggl[\int_{0}^1t^{n-1}(n-2t)\td t\biggr]^{1-1/p}
\biggl[\int_{0}^1t^{n-1}(n-2t)\bigl|f^{(n)}(ta+m(1-t)b)\bigr|^p\td t\biggr]^{1/p}.
\end{multline}
Using the $(\alpha,m)$-convexity of $\bigl|f^{(n)}(x)\bigr|^p$ produces
\begin{multline}\label{eq3.1.4}
\int_{0}^1t^{n-1}(n-2t)\bigl|f^{(n)}(ta+m(1-t)b)\bigr|^p \td t\\
\le\int_{0}^1t^{n-1}(n-2t)\bigl[t^{\alpha}\bigl|f^{(n)}(a)\bigr|^p
+m(1-t^{\alpha})\bigl|f^{(n)}(b)\bigr|^p\bigr]\td t\\
=\frac{n(n-1)+\alpha(n-2)}{(n+\alpha)(n+\alpha+1)}\bigl|f^{(n)}(a)\bigr|^p
 +m\biggl[\frac{n-1}{n+1}-\frac{n(n-1)+\alpha(n-2)}{(n+\alpha)(n+\alpha+1)}\biggr]\bigl|f^{(n)}(b)\bigr|^p.
\end{multline}
Substituting~\eqref{eq3.1.3} and~\eqref{eq3.1.4} into~\eqref{eq3.1.2} yields the inequality~\eqref{eq3.1.1}.
This completes the proof of Theorem~\ref{th3.1}.
\end{proof}

\begin{corollary}\label{cor3.1}
Under conditions of Theorem~\ref{th3.1},
\begin{enumerate}
\item
when $m=1$, we have
\begin{multline*}
\biggl|\frac{f(a)+f(b)}{2}-\frac{1}{b-a}\int_{a}^{b}f(x)\td x -\frac12\sum_{k=2}^{n-1}\frac{(k-1)(b-a)^k}{(k+1)!}f^{(k)}(a)\biggr|
\le\frac12\frac{(b-a)^n}{n!}\biggl(\frac{n-1}{n+1}\biggr)^{1-1/p}\\
\times\biggl\{\frac{n(n-1)+\alpha(n-2)}{(n+\alpha)(n+\alpha+1)}\bigl|f^{(n)}(a)\bigr|^p
+\biggl[\frac{n-1}{n+1}-\frac{n(n-1) +\alpha(n-2)}{(n+\alpha)(n+\alpha+1)}\biggr]\bigl|f^{(n)}(b)\bigr|^p\biggr\}^{1/p};
\end{multline*}
\item
when $n=2$, we have
\begin{multline*}
\biggl|\frac{f(a)+f(mb)}{2}-\frac{1}{mb-a}\int_{a}^{mb}f(x)\td x \biggr|\\
\le\frac{(mb-a)^2}4\biggl(\frac13\biggr)^{1-1/p}
\biggl\{\frac2{(\alpha+2)(\alpha+3)}\bigl|f''(a)\bigr|^p +m\biggl[\frac13-\frac2{(\alpha+2)(\alpha+3)}
\biggr]\bigl|f''(b)\bigr|^p\biggr\}^{1/p};
\end{multline*}
\item
when $m=\alpha=p=1$ and $n=2$, we have
\begin{equation*}
\biggl|\frac{f(a)+f(b)}{2}-\frac{1}{b-a}\int_{a}^{b}f(x)\td x \biggr|
\le\frac{(b-a)^2}{24}\bigl[\bigl|f''(a)\bigr|+\bigl|f''(b)\bigr|\bigr];
\end{equation*}
\item
when $m=\alpha=1$ and $p=n=2$, we have
\begin{equation*}
\biggl|\frac{f(a)+f(b)}{2}-\frac{1}{b-a}\int_{a}^{b}f(x)\td x \biggr|
\le\frac{(b-a)^2}{12}\biggl[\frac{|f''(a)|^2+|f''(b)|^2}2\biggr]^{1/2}.
\end{equation*}
\end{enumerate}
\end{corollary}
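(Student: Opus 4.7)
The plan is to obtain each of the four assertions as a direct specialization of inequality~\eqref{eq3.1.1} in Theorem~\ref{th3.1}, with no new analytic ingredient needed; the only work is bookkeeping and arithmetic simplification of the coefficients.

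For part (1), I would simply set $m=1$ in~\eqref{eq3.1.1}. This reduces the quantity $mb-a$ to $b-a$ everywhere it appears and leaves all the combinatorial factors involving $n$ and $\alpha$ untouched, producing the displayed inequality verbatim. For part (2), I would substitute $n=2$: the sum $\sum_{k=2}^{n-1}$ is empty by the convention spelled out in the theorem, the prefactor $\tfrac12\tfrac{(mb-a)^n}{n!}$ becomes $\tfrac{(mb-a)^2}{4}$, the exponent base $\tfrac{n-1}{n+1}$ collapses to $\tfrac13$, and the coefficient $\tfrac{n(n-1)+\alpha(n-2)}{(n+\alpha)(n+\alpha+1)}$ loses its $\alpha(n-2)$ term and simplifies to $\tfrac{2}{(\alpha+2)(\alpha+3)}$; these substitutions reproduce the stated bound.

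For parts (3) and (4), I would start from part (2) rather than from~\eqref{eq3.1.1} directly, since part (2) already carries out the $n=2$ reduction. In part (3) I set $m=\alpha=p=1$: then $\tfrac{2}{(\alpha+2)(\alpha+3)}=\tfrac{1}{6}$, the complementary weight $\tfrac13-\tfrac16=\tfrac16$, and the factor $(1/3)^{1-1/p}=1$, so the bound becomes $\tfrac{(b-a)^2}{4}\cdot\tfrac16\bigl[|f''(a)|+|f''(b)|\bigr]=\tfrac{(b-a)^2}{24}\bigl[|f''(a)|+|f''(b)|\bigr]$. In part (4) I set $m=\alpha=1$ and $p=n=2$: the two inner coefficients are again both $\tfrac16$, so the braced expression equals $\tfrac16\bigl[|f''(a)|^2+|f''(b)|^2\bigr]=\tfrac13\cdot\tfrac{|f''(a)|^2+|f''(b)|^2}{2}$; combining the outer $(1/3)^{1/2}$ with the $(1/3)^{1/2}$ coming from the $1/p$-th power yields the factor $\tfrac13$, and the prefactor $\tfrac{(b-a)^2}{4}\cdot\tfrac13=\tfrac{(b-a)^2}{12}$ delivers the target inequality.

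There is no genuine obstacle here: the reasoning is purely algebraic substitution. The only place where one must be a little careful is in part (4), where the $(1/3)^{1-1/p}$ from the Hölder step and the extra $(1/3)^{1/2}$ extracted from the braced quantity must be multiplied together to give the clean $1/12$ coefficient; keeping track of that merger is the one spot where an arithmetic slip could occur, so I would verify it explicitly when writing out the proof.
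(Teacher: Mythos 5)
Your proposal is correct and matches the paper's approach exactly: the paper states Corollary~\ref{cor3.1} without proof precisely because it is, as you say, a direct specialization of inequality~\eqref{eq3.1.1} in Theorem~\ref{th3.1} followed by routine arithmetic. Your coefficient computations (in particular the merging of $(1/3)^{1/2}\cdot(1/3)^{1/2}=1/3$ in part~(4) to obtain the factor $\frac{(b-a)^2}{12}$) all check out.
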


\begin{remark}
Under conditions of Theorem~\ref{th3.1},
\begin{enumerate}
\item
when $n=2$, the inequality~\eqref{eq3.1.1} becomes the one~\eqref{thm1.3-n=2} in~\cite[Theorem~3]{M.E.-2011-2614};
\item
when $\alpha=m=1$, Theorem~\ref{th3.1} becomes~\cite[Theorem~3.1]{Hwang-Kyugpook-03}.
\end{enumerate}
\end{remark}

\begin{theorem}\label{th3.2}
Let $(\alpha,m)\in[0,1]\times(0,1]$ and $b>a>0$ with $a<mb$.
If $f(x)$ is $n$-time differentiable on $[0,b]$ such that $\bigl|f^{(n)}(x)\bigr|\in L([0,mb])$ and $\bigl|f^{(n)}(x)\bigr|^p$ is $(\alpha,m)$-convex on $[0,mb]$ for $n\ge 2$ and $p>1$, then
\begin{multline}\label{th3.2-ineq}
\biggl|\frac{f(a)+f(mb)}{2}-\frac{1}{mb-a}\int_{a}^{mb}f(x)\td x -\frac12\sum_{k=2}^{n-1}\frac{(k-1)(mb-a)^k}{(k+1)!}f^{(k)}(a)\biggr|\\
\le\frac12\frac{(mb-a)^n}{n!}\biggl[\frac{n^{q+1}-(n-2)^{q+1}}{2(q+1)}\biggr]^{1/q} \biggl\{\frac1{p(n-1)+\alpha+1}\bigl|f^{(n)}(a)\bigr|^p\\*
+\frac{m\alpha}{[p(n-1)+1][p(n-1)+\alpha+1]}\bigl|f^{(n)}(b)\bigr|^p\biggr\}^{1/p},
\end{multline}
where the sum above takes $0$ when $n=2$ and $\frac1p+\frac1q=1$.
\end{theorem}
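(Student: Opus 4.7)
The plan is to start from Lemma~\ref{lem2.1} exactly as in the proof of Theorem~\ref{th3.1}, bringing the absolute value inside the integral on the right-hand side of~\eqref{eq2.1} to obtain the same preliminary bound~\eqref{eq3.1.2}. The novelty here, compared with Theorem~\ref{th3.1}, lies in how H\"older's inequality is applied: instead of splitting the weight $t^{n-1}(n-2t)$ symmetrically between the two H\"older factors, I would split it asymmetrically as
\[
t^{n-1}(n-2t)\bigl|f^{(n)}(ta+m(1-t)b)\bigr|=(n-2t)\cdot t^{n-1}\bigl|f^{(n)}(ta+m(1-t)b)\bigr|
\]
and apply H\"older's inequality with conjugate exponents $q$ and $p$ to obtain
\[
\int_{0}^{1}t^{n-1}(n-2t)\bigl|f^{(n)}(ta+m(1-t)b)\bigr|\td t
\le\biggl[\int_{0}^{1}(n-2t)^{q}\td t\biggr]^{1/q}\biggl[\int_{0}^{1}t^{p(n-1)}\bigl|f^{(n)}(ta+m(1-t)b)\bigr|^{p}\td t\biggr]^{1/p}.
\]

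The first factor is a direct computation: the substitution $u=n-2t$ gives
\[
\int_{0}^{1}(n-2t)^{q}\td t=\frac{1}{2}\int_{n-2}^{n}u^{q}\td u=\frac{n^{q+1}-(n-2)^{q+1}}{2(q+1)},
\]
which produces precisely the first bracket in~\eqref{th3.2-ineq}. For the second factor I would invoke the $(\alpha,m)$-convexity of $\bigl|f^{(n)}\bigr|^{p}$ to estimate
\[
\bigl|f^{(n)}(ta+m(1-t)b)\bigr|^{p}\le t^{\alpha}\bigl|f^{(n)}(a)\bigr|^{p}+m(1-t^{\alpha})\bigl|f^{(n)}(b)\bigr|^{p},
\]
and then evaluate the two elementary integrals
\[
\int_{0}^{1}t^{p(n-1)+\alpha}\td t=\frac{1}{p(n-1)+\alpha+1},\qquad
\int_{0}^{1}t^{p(n-1)}(1-t^{\alpha})\td t=\frac{\alpha}{[p(n-1)+1][p(n-1)+\alpha+1]}.
\]
Combining these with the prefactor $\frac{1}{2}\frac{(mb-a)^{n}}{n!}$ from~\eqref{eq3.1.2} produces the right-hand side of~\eqref{th3.2-ineq}.

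There is no real obstacle in this argument: the only mild point of care is the substitution yielding the closed form $[n^{q+1}-(n-2)^{q+1}]/[2(q+1)]$, and the algebraic simplification $\frac{1}{p(n-1)+1}-\frac{1}{p(n-1)+\alpha+1}=\frac{\alpha}{[p(n-1)+1][p(n-1)+\alpha+1]}$, both of which are routine. As in Theorem~\ref{th3.1}, the case $n=2$ (in which the sum on the left is empty) is already covered by the same calculation, so no separate treatment is needed.
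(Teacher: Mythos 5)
Your proposal is correct and follows essentially the same route as the paper's own proof: the same asymmetric H\"older split separating $(n-2t)$ (raised to the power $q$) from $t^{n-1}\bigl|f^{(n)}\bigr|$ (raised to the power $p$), the same evaluation $\int_{0}^{1}(n-2t)^{q}\td t=\frac{n^{q+1}-(n-2)^{q+1}}{2(q+1)}$, and the same application of $(\alpha,m)$-convexity to the factor $\int_{0}^{1}t^{p(n-1)}\bigl|f^{(n)}(ta+m(1-t)b)\bigr|^{p}\td t$. All computations check out, so there is nothing to add.
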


\begin{proof}
It follows from Lemma~\ref{lem2.1} that
\begin{multline}\label{eq3.2.2}
\biggl|\frac{f(a)+f(mb)}{2}-\frac{1}{mb-a}\int_{a}^{mb}f(x)\td x -\frac12\sum_{k=2}^{n-1}\frac{(k-1)(mb-a)^k}{(k+1)!}f^{(k)}(a)\biggr|\\
\le \frac12\frac{(mb-a)^n}{n!}\int_{0}^1t^{n-1}(n-2t)\bigl|f^{(n)}(ta+m(1-t)b)\bigr|\td t.
\end{multline}
By the well-known H\"older integral inequality, we obtain
\begin{multline}\label{eq3.2.3}
\int_{0}^1t^{n-1}(n-2t)\bigl|f^{(n)}(ta+m(1-t)b)\bigr|\td t\\
\le\biggl[\int_{0}^1(n-2t)^q\td t\biggr]^{1/q} \biggl[\int_{0}^1t^{p(n-1)}\bigl|f^{(n)}(ta+m(1-t)b)\bigr|^p\td t\biggr]^{1/p}\\
=\biggl[\frac{n^{q+1}-(n-2)^{q+1}}{2(q+1)}\biggr]^{1/q} \biggl[\int_{0}^1t^{p(n-1)}\bigl|f^{(n)}(ta+m(1-t)b)\bigr|^p\td t\biggr]^{1/p}.
\end{multline}
Making use of the $(\alpha,m)$-convexity of $\bigl|f^{(n)}(x)\bigr|^p$ reveals
\begin{multline}\label{eq3.2.4}
\int_{0}^1t^{p(n-1)}\bigl|f^{(n)}(ta+m(1-t)b)\bigr|^p \td t \\*
\begin{aligned}
&\le\int_{0}^1t^{p(n-1)}\bigl[t^{\alpha}\bigl|f^{(n)}(a)\bigr|^p +m(1-t^{\alpha})\bigl|f^{(n)}(b)\bigr|^p\bigr]\td t\\
&=\bigl|f^{(n)}(a)\bigr|^p\int_{0}^1t^{p(n-1)+\alpha}\td t +m\bigl|f^{(n)}(b)\bigr|^p\int_{0}^1t^{p(n-1)}(1-t^{\alpha})\td t
\end{aligned}\\
=\frac{\bigl|f^{(n)}(a)\bigr|^p}{p(n-1)+\alpha+1} +\frac{m\alpha}{[p(n-1)+1][p(n-1)+\alpha+1]}\bigl|f^{(n)}(b)\bigr|^p.
\end{multline}
Combining~\eqref{eq3.2.3} and~\eqref{eq3.2.4} with~\eqref{eq3.2.2} results in the inequality~\eqref{th3.2-ineq}.
This completes the proof of Theorem~\ref{th3.2}.
\end{proof}

\begin{corollary}
Under conditions of Theorem~\ref{th3.2},
\begin{enumerate}
\item
when $m=1$, we have
\begin{multline*}
\biggl|\frac{f(a)+f(b)}2-\frac{1}{b-a}\int_{a}^{b}f(x)\td x -\frac12\sum_{k=2}^{n-1}\frac{(k-1)(b-a)^k}{(k+1)!}f^{(k)}(a)\biggr|\\*
\le\frac12\frac{(b-a)^n}{n!}\biggl[\frac{n^{q+1}-(n-2)^{q+1}}{2(q+1)}\biggr]^{1/q} \biggl\{\frac1{p(n-1)+\alpha+1}\bigl|f^{(n)}(a)\bigr|^p\\*
+\frac{\alpha}{[p(n-1)+1][p(n-1)+\alpha+1]}\bigl|f^{(n)}(b)\bigr|^p\biggr\}^{1/p};
\end{multline*}
\item
when $n=2$, we have
\begin{multline*}
\biggl|\frac{f(a)+f(mb)}{2}-\frac{1}{mb-a}\int_{a}^{mb}f(x)\td x \biggr|\\
\le\frac{(mb-a)^2}2\biggl(\frac1{q+1}\biggr)^{1/q}
\biggl[\frac1{p+\alpha+1}\bigl|f''(a)\bigr|^p+\frac{m\alpha}
{(p+1)(p+\alpha+1)}\bigl|f''(b)\bigr|^p\biggr]^{1/p};
\end{multline*}
\item
when $m=\alpha=1$ and $n=2$, we have
\begin{equation} \label{e}
\biggl| \frac{f(a)  +f(b)  }{2}-\frac{1}{b-a}\int_{a}^{b}f(x)  \td x\biggr|
\le\frac{(b-a)^{2}}{2(p+1)^{1/p}(q+2)^{1/q}} \biggl[\frac{(q+1)|f''(a)|^{q}+|f''(b)|^{q}}{q+1}\biggr]^{1/q},
\end{equation}
where $\frac{1}{p}+\frac{1}{q}=1$.
\end{enumerate}
\end{corollary}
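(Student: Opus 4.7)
The plan is to derive each of the three items by specializing the parameters $(\alpha,m,n,p,q)$ in Theorem~\ref{th3.2} and carrying out the algebraic simplifications. No new integral inequality needs to be proved; everything is substitution plus routine bookkeeping on the two factors coming from the H\"older-type estimate.

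For part~(1) I would simply set $m=1$ in the inequality~\eqref{th3.2-ineq}. The interval $[a,mb]$ becomes $[a,b]$, and the multiplicative $m$ sitting in front of $|f^{(n)}(b)|^p$ disappears, producing the stated form verbatim.

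For part~(2) I would set $n=2$ in Theorem~\ref{th3.2}. Three routine simplifications are needed: the sum $\sum_{k=2}^{n-1}$ is empty and hence zero; the prefactor becomes $\frac12\frac{(mb-a)^2}{2!}=\frac{(mb-a)^2}{4}$; and, using $(n-2)^{q+1}=0$, the H\"older factor collapses to
\[
\biggl[\frac{n^{q+1}-(n-2)^{q+1}}{2(q+1)}\biggr]^{1/q}=\biggl[\frac{2^{q}}{q+1}\biggr]^{1/q}=\frac{2}{(q+1)^{1/q}}.
\]
Together with $p(n-1)+\alpha+1=p+\alpha+1$ and $p(n-1)+1=p+1$, these give exactly the displayed bound.

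For part~(3) I would start from part~(2) and put $m=\alpha=1$, obtaining
\[
\frac{(b-a)^2}{2}\biggl(\frac{1}{q+1}\biggr)^{1/q}\biggl[\frac{|f''(a)|^p}{p+2}+\frac{|f''(b)|^p}{(p+1)(p+2)}\biggr]^{1/p}.
\]
Because $p$ and $q$ in Theorem~\ref{th3.2} are interchangeable Hölder-conjugate dummy indices (the theorem may equally well be applied with $|f''(x)|^{q}$ playing the role of the convex function, so that its ``$p$'' is our $q$ and vice versa), one may relabel $p\leftrightarrow q$, combine the resulting $(1/(p+1))^{1/p}$ with $1/(q+2)^{1/q}$ extracted from the bracket, and put the numerator of the bracket over the common denominator $q+1$. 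This reproduces the displayed inequality~(e).

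The only mild obstacle is the cosmetic mismatch in~(3): Theorem~\ref{th3.2} is stated with the convex function normalized as $|f^{(n)}|^{p}$ and the bracket raised to the $1/p$ power, while~(e) groups everything around $|f''|^{q}$ and raises to $1/q$. Observing that the hypothesis ``$|f''|^{p}$ is convex for some $p>1$'' is just ``$|f''|^{r}$ is convex for some $r>1$'' under a relabeling resolves this, so no extra estimate is needed beyond the algebraic rearrangement indicated above.
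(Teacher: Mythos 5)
Your proposal is correct and coincides with the paper's (implicit) argument: the corollary is never proved in the text precisely because, as you show, it is direct specialization of Theorem~\ref{th3.2} ($m=1$; $n=2$ with the empty sum, the prefactor $\frac{(mb-a)^2}{4}$, and the collapsed H\"older factor $2/(q+1)^{1/q}$; then $m=\alpha=1$). Your explicit treatment of part~(3) — recognizing that the displayed bound is the $p\leftrightarrow q$ relabeling of what direct substitution gives, so that the convexity hypothesis must be read as attached to the exponent $q$ appearing in the bound (legitimate since conjugation is a bijection of $(1,\infty)$ onto itself) — supplies the one nontrivial step the paper silently glosses over, and it is the only reading under which the stated inequality and its later application to $f(x)=x^r$ in Theorem~\ref{Prop1} are consistent.
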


\begin{theorem}\label{th3.3}
Let $(\alpha,m)\in[0,1]\times(0,1]$ and $b>a>0$ with $a<mb$. If $f(x)$ is $n$-time differentiable on $[0,b]$ such that $\bigl|f^{(n)}(x)\bigr|\in L([0,mb])$ and $\bigl|f^{(n)}(x)\bigr|^p$ is $(\alpha,m)$-convex on $[0,mb]$ for $n\ge 2$ and $p\ge 1$, then
\begin{multline}
\biggl|\frac{f(a)+f(mb)}{2}-\frac{1}{mb-a}\int_{a}^{mb}f(x)\td x -\frac12\sum_{k=2}^{n-1}\frac{(k-1)(mb-a)^k}{(k+1)!}f^{(k)}(a)\biggr|\\
\begin{aligned}
&\le\frac{(n-1)^{1-1/p}}2\frac{(mb-a)^n}{n!} \biggl\{\frac{(n-2)(pn-p+\alpha)+2(n-1)} {(pn-p+\alpha+1)(pn-p+\alpha+2)}\bigl|f^{(n)}(a)\bigr|^p\\
&\quad+m\bigg[\frac{(n-1)(pn-2p+2)}{(pn-p+1)(pn-p+2)}-
\frac{(n-2)(pn-p+\alpha)+2(n-1)} {(pn-p+\alpha+1)(pn-p+\alpha+2)}\biggr]\bigl|f^{(n)}(b)\bigr|^p\biggr\}^{1/p},
\end{aligned}
\end{multline}
where the sum above takes $0$ when $n=2$.
\end{theorem}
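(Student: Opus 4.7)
The plan is to follow the template established for Theorems~\ref{th3.1} and~\ref{th3.2}: start from the identity in Lemma~\ref{lem2.1}, pass to absolute values inside the integral, apply H\"older's inequality in an appropriate form, and then invoke the $(\alpha,m)$-convexity of $\bigl|f^{(n)}\bigr|^p$. What distinguishes the present case is the particular weighted H\"older (``power-mean'') split to be used; the appearance of the prefactor $(n-1)^{1-1/p}$ on the right-hand side already signals the correct choice.

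Concretely, after bounding the left-hand side by
\[
\frac12\frac{(mb-a)^n}{n!}\int_{0}^1t^{n-1}(n-2t)\bigl|f^{(n)}(ta+m(1-t)b)\bigr|\td t
\]
exactly as in~\eqref{eq3.1.2}, I would factor the integrand as
\[
t^{n-1}(n-2t)\bigl|f^{(n)}(\cdots)\bigr|=(n-2t)^{1/q}\cdot(n-2t)^{1/p}t^{n-1}\bigl|f^{(n)}(\cdots)\bigr|
\]
and apply H\"older's inequality with exponents $p,q$. Using $\int_{0}^{1}(n-2t)\td t=n-1$ gives the constant $(n-1)^{1/q}=(n-1)^{1-1/p}$ and reduces matters to estimating $\int_{0}^{1}(n-2t)t^{p(n-1)}\bigl|f^{(n)}(ta+m(1-t)b)\bigr|^p\td t$. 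Equivalently, this is the weighted power-mean inequality with weight $w(t)=n-2t$. At $p=1$ no H\"older step is needed and the $(\alpha,m)$-convexity may be applied directly; since $(n-1)^{1-1/p}=1$ in that case, the two regimes match.

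Next, I would invoke $(\alpha,m)$-convexity in the form
\[
\bigl|f^{(n)}(ta+m(1-t)b)\bigr|^p\le t^{\alpha}\bigl|f^{(n)}(a)\bigr|^p+m(1-t^{\alpha})\bigl|f^{(n)}(b)\bigr|^p
\]
and compute the two moments
\[
\int_{0}^{1}(n-2t)t^{p(n-1)+\alpha}\td t=\frac{(n-2)(pn-p+\alpha)+2(n-1)}{(pn-p+\alpha+1)(pn-p+\alpha+2)},
\]
\[
\int_{0}^{1}(n-2t)t^{p(n-1)}\td t=\frac{(n-1)(pn-2p+2)}{(pn-p+1)(pn-p+2)}.
\]
The first moment is the coefficient of $\bigl|f^{(n)}(a)\bigr|^p$; the coefficient of $m\bigl|f^{(n)}(b)\bigr|^p$ is the second moment minus the first (as in the proof of Theorem~\ref{th3.1}), which is exactly the bracketed expression claimed. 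Assembling these pieces produces the stated inequality.

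The only real obstacle is the algebraic bookkeeping needed to simplify the two moment integrals into the precise rational forms appearing in the statement; no analytic ingredient beyond the H\"older split identified above is required.
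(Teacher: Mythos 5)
Your proposal is correct and follows essentially the same route as the paper: the paper's proof applies H\"older's inequality in exactly the form you describe, splitting the weight as $(n-2t)^{1/q}\cdot(n-2t)^{1/p}t^{n-1}$ so that $\bigl[\int_0^1(n-2t)\td t\bigr]^{1-1/p}=(n-1)^{1-1/p}$ appears and the $(\alpha,m)$-convexity is then applied inside $\int_0^1 t^{p(n-1)}(n-2t)\bigl|f^{(n)}(ta+m(1-t)b)\bigr|^p\td t$. Your two moment evaluations, including the simplification $(n-2)(pn-p)+2(n-1)=(n-1)(pn-2p+2)$, agree with the coefficients in the stated bound.
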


\begin{proof}
Utilizing Lemma~\ref{eq2.1}, H\"older integral inequality, and the $(\alpha,m)$-convexity of $\bigl|f^{(n)}(x)\bigr|^p$ yields
\begin{align*}
&\quad\biggl|\frac{f(a)+f(mb)}{2}-\frac{1}{mb-a}\int_{a}^{mb}f(x)\td x -\frac12\sum_{k=2}^{n-1}\frac{(k-1)(mb-a)^k}{(k+1)!}f^{(k)}(a)\biggr|\\
&\le\frac12\frac{(mb-a)^n}{n!}\int_{0}^1t^{n-1}(n-2t)\bigl|f^{(n)}(ta+m(1-t)b)\bigr|\td t\\
&\le\frac12\frac{(mb-a)^n}{n!}\biggl[\int_{0}^1(n-2t)\td t\biggr]^{1-1/p}\\
&\quad\times\biggl\{\int_{0}^1t^{p(n-1)}(n-2t)\bigl[t^{\alpha}|f^{(n)}(a)|^p +m(1-t^{\alpha})|f^{(n)}(b)|^p\bigr]\td t\biggr\}^{1/p}\\
&=\frac{(n-1)^{1-1/p}}2\frac{(mb-a)^n}{n!} \biggl\{\frac{(n-2)(pn-p+\alpha)+2(n-1)}{(pn-p+\alpha+1)(pn-p+\alpha+2)}\bigl|f^{(n)}(a)\bigr|^p+m\\
&\times\bigg[\frac{(n-1)(pn-2p+2)}{(pn-p+1)(pn-p+2)}-\frac{(n-2)(pn-p+\alpha)+2(n-1)}{(pn-p+\alpha+1)
(pn-p+\alpha+2)}\biggr]\bigl|f^{(n)}(b)\bigr|^p\biggr\}^{1/p}.
\end{align*}
This completes the proof of Theorem~\ref{th3.3}.
\end{proof}

\begin{corollary}
Under conditions of Theorem~\ref{th3.3},
\begin{enumerate}
\item
when $m=1$, we have
\begin{multline*}
\biggl|\frac{f(a)+f(b)}{2}-\frac{1}{b-a}\int_{a}^{b}f(x)\td x -\frac12\sum_{k=2}^{n-1}\frac{(k-1)(b-a)^k}{(k+1)!}f^{(k)}(a)\biggr|\\
\le\frac{(n-1)^{1-1/p}}2\frac{(b-a)^n}{n!} \biggl\{\frac{(n-2)(pn-p+\alpha)+2(n-1)}{(pn-p+\alpha+1)(pn-p+\alpha+2)}\bigl|f^{(n)}(a)\bigr|^p\\
+\biggl[\frac{(n-1)(pn-2p+2)}{(pn-p+1)(pn-p+2)}
-\frac{(n-2)(pn-p+\alpha)+2(n-1)}{(pn-p+\alpha+1)(pn-p+\alpha+2)}\biggr]\bigl|f^{(n)}(b)\bigr|^p\biggr\}^{1/p};
\end{multline*}
\item
when $n=2$, we have
\begin{multline*}
\biggl|\frac{f(a)+f(mb)}{2}-\frac{1}{mb-a}\int_{a}^{mb}f(x)\td x\biggr|
\le\frac{(mb-a)^2}4\biggl\{\frac2{(p+\alpha+1)(p+\alpha+2)}\bigl|f''(a)\bigr|^p\\
+m\biggl[\frac2{(p+1)(p+2)}-\frac2{(p+\alpha+1)(p+\alpha+2)}\biggr]\bigl|f''(b)\bigr|^p\biggr\}^{1/p};
\end{multline*}
\item
when $m=\alpha=1$ and $n=2$, we have
\begin{equation} \label{k}
\biggl|\frac{f(a)  +f(b)  }{2}-\frac{1}{b-a}\int_{a}^{b}f(x)  \td x\biggr|\le\frac{(b-a)  ^{2}}{2^{2-1/p}}\biggl[  \frac{(p+1)  |f''(a)|^{p}+2|f''(b)|^{p}}{(p+1)(p+2)(p+3)}\biggr]^{1/p}.
\end{equation}
\end{enumerate}
\end{corollary}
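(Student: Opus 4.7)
The plan is to derive all three items by direct specialization of Theorem~\ref{th3.3} at the indicated parameter choices; no new analytic ingredients are needed, only routine algebraic bookkeeping on the coefficients appearing inside the braces.

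For part~(1) I simply set $m=1$ in the conclusion of Theorem~\ref{th3.3}. This replaces every occurrence of $mb$ by $b$ and turns the explicit factor $m$ in front of $\bigl|f^{(n)}(b)\bigr|^p$ into $1$; no other changes occur, so the stated inequality reads off immediately.

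For part~(2) I substitute $n=2$. The sum $\sum_{k=2}^{n-1}$ is empty by the convention in Theorem~\ref{th3.3}. The outer factor $\frac{(n-1)^{1-1/p}}{2}\cdot\frac{(mb-a)^n}{n!}$ collapses to $\frac{(mb-a)^2}{4}$ since $(n-1)^{1-1/p}=1$ and $n!=2$. Inside the braces I evaluate $(n-2)(pn-p+\alpha)+2(n-1)=2$ with $(pn-p+\alpha+1)(pn-p+\alpha+2)=(p+\alpha+1)(p+\alpha+2)$, and analogously $(n-1)(pn-2p+2)=2$ with $(pn-p+1)(pn-p+2)=(p+1)(p+2)$. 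Substituting these back produces the coefficients $\frac{2}{(p+\alpha+1)(p+\alpha+2)}$ in front of $|f''(a)|^p$ and $\frac{2}{(p+1)(p+2)}-\frac{2}{(p+\alpha+1)(p+\alpha+2)}$ in front of $m|f''(b)|^p$, which is the claim.

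Part~(3) is obtained from part~(2) by further setting $m=\alpha=1$. The first inner coefficient becomes $\frac{2}{(p+2)(p+3)}$, and combining the two fractions in the second coefficient over the common denominator $(p+1)(p+2)(p+3)$ yields $\frac{2(p+3)-2(p+1)}{(p+1)(p+2)(p+3)}=\frac{4}{(p+1)(p+2)(p+3)}$. Pulling the common factor $2$ out of the braces and absorbing its $p$-th root into the outer constant converts $\frac{(b-a)^2}{4}\cdot 2^{1/p}$ into $\frac{(b-a)^2}{2^{2-1/p}}$, reproducing exactly the form~\eqref{k}. The only step that requires a moment of care is this final absorption of the constant $2^{1/p}$ through the $1/p$-power into the denominator exponent $2-1/p$; every other manipulation is mechanical substitution into the formula of Theorem~\ref{th3.3}.
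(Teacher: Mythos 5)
Your proposal is correct and matches the paper's (implicit) argument: the corollary is exactly the specialization of Theorem~\ref{th3.3} at $m=1$, at $n=2$, and at $m=\alpha=1$, $n=2$, and your coefficient computations, including the factorization $2(p+1)|f''(a)|^p+4|f''(b)|^p=2\bigl[(p+1)|f''(a)|^p+2|f''(b)|^p\bigr]$ and the absorption of $2^{1/p}$ into $\frac{(b-a)^2}{2^{2-1/p}}$, all check out.
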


\section{Applications to special means}

It is well known that, for positive real numbers $\alpha$ and $\beta$ with $\alpha\ne\beta$, the quantities
\begin{gather*}
A(\alpha,\beta)=\frac{\alpha+\beta}{2},\quad G(\alpha,\beta)=\sqrt{\alpha\beta}\,, \quad H( \alpha,\beta) =\frac{2}{1/\alpha+1/\beta},\\
I(\alpha,\beta)=\frac{1}{e}\biggl(\frac{\beta^{\beta}}{\alpha^{\alpha}}\biggr)^{1/(\beta-\alpha)}, \quad
L(\alpha,\beta)=\frac{\alpha-\beta}{\ln\alpha-\ln\beta},\quad
L_{r}(\alpha,\beta)=\biggl[  \frac{\beta^{r+1}-\alpha^{r+1}}{(r+1)(\beta-\alpha)}\biggr]^{1/r}
\end{gather*}
for $r\ne0,-1$ are respectively called the arithmetic, geometric, harmonic, exponential, logarithmic, and generalized logarithmic means.
\par
Basing on inequalities of Hermite-Hadamard type in the above section, we shall derive some inequalities of the above defined means as follows.

\begin{theorem}\label{Prop1}
Let $r\in(-\infty,0)\cup[1,\infty)\setminus\{-1\}$ and $b>a>0$. Then, for $p,q>1$,
\begin{equation}\label{m}
|A(a^{r},b^{r})-[L_{r}(a,b)]^{r}|\le\frac{(b-a)^{2}r(r-1)}{2(p+1)^{1/p}(q+2)^{1/q}} \biggl[a^{(r-2)q}+\frac{b^{(r-2)q}}{q+1}\biggr]^{1/q},
\end{equation}
where $\frac1p+\frac1q=1$.
\end{theorem}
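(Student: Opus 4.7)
The plan is to apply the special case~\eqref{e} of the corollary to Theorem~\ref{th3.2} (namely, the situation $m=\alpha=1$, $n=2$) to the power function $f(x)=x^{r}$ on the interval $[a,b]\subset(0,\infty)$, and to read off~\eqref{m} as a direct specialization.

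First I would assemble the left-hand side. For $f(x)=x^{r}$, direct computation gives $\frac{f(a)+f(b)}{2}=A(a^{r},b^{r})$ and $\frac{1}{b-a}\int_{a}^{b}x^{r}\,\td x = \frac{b^{r+1}-a^{r+1}}{(r+1)(b-a)} = [L_{r}(a,b)]^{r}$, so the left side of~\eqref{e} specializes to exactly $|A(a^{r},b^{r})-[L_{r}(a,b)]^{r}|$, matching the left side of~\eqref{m}. Next I would handle the second derivative: $f''(x)=r(r-1)x^{r-2}$, and the assumption $r\in(-\infty,0)\cup[1,\infty)\setminus\{-1\}$ ensures $r(r-1)\ge0$, so $|f''(x)|=r(r-1)x^{r-2}$ on $(0,\infty)$. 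Consequently $|f''(a)|^{q}=[r(r-1)]^{q}a^{(r-2)q}$ and $|f''(b)|^{q}=[r(r-1)]^{q}b^{(r-2)q}$.

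Before invoking~\eqref{e} I would verify its underlying convexity hypothesis on $|f''|^{q}$: up to the positive constant $[r(r-1)]^{q}$, this is simply the power $x^{(r-2)q}$, which is convex on $(0,\infty)$ precisely when $(r-2)q\le0$ or $(r-2)q\ge1$. This condition covers the admissible ranges (and is automatic whenever $r\le2$, hence in particular for $r\in(-\infty,0)\cup[1,2]\setminus\{-1\}$; for $r>2$ one uses $(r-2)q\ge 1$).

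Once the setup is in place, the proof is essentially bookkeeping: substitute the preceding expressions into the right side of~\eqref{e}, factor the constant $r(r-1)$ out of the bracket $\bigl[\frac{(q+1)|f''(a)|^{q}+|f''(b)|^{q}}{q+1}\bigr]^{1/q}$, and rewrite the remaining factor as $\bigl[a^{(r-2)q}+\frac{b^{(r-2)q}}{q+1}\bigr]^{1/q}$. This produces the right-hand side of~\eqref{m}. Since the argument is a direct specialization of~\eqref{e}, there is no real obstacle; the only point requiring some care is the convexity check above, together with ensuring that the sign accounting in $|f''(x)|=r(r-1)x^{r-2}$ is handled correctly across the two cases $r\le0$ (with $r\ne-1$) and $r\ge1$.
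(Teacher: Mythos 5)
Your approach is exactly the paper's: the paper's entire proof of Theorem~\ref{Prop1} is the single sentence that it ``follows from applying the inequality~\eqref{e} to the function $f(x)=x^{r}$,'' and your computations --- identifying the left side of~\eqref{e} with $|A(a^{r},b^{r})-[L_{r}(a,b)]^{r}|$, using $r(r-1)\ge0$ to write $|f''(x)|=r(r-1)x^{r-2}$, and factoring $[r(r-1)]^{q}$ out of the bracket to obtain $\bigl[a^{(r-2)q}+\frac{b^{(r-2)q}}{q+1}\bigr]^{1/q}$ --- are precisely the bookkeeping behind that sentence.

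There is, however, one genuine flaw in your added verification step. You claim the convexity hypothesis on $|f''|^{q}$ ``covers the admissible ranges,'' arguing that for $r>2$ one has $(r-2)q\ge1$. But $q>1$ is an arbitrary conjugate exponent, and $(r-2)q\ge1$ requires $q\ge1/(r-2)$, which fails whenever $2<r<3$ and $1<q<1/(r-2)$: for instance $r=5/2$, $q=3/2$ gives $(r-2)q=3/4$, and $x^{3/4}$ is strictly concave on $(0,\infty)$, so the hypothesis of~\eqref{e} is not satisfied and the specialization is not justified for such pairs $(r,q)$. To be fair, this gap is inherited from the paper itself: its one-line proof never checks the convexity hypothesis, and its statement of Theorem~\ref{Prop1} admits exactly these problematic pairs (a related unchecked point, also the paper's, is that Theorem~\ref{th3.2} formally requires convexity and integrability on $[0,b]$, which fail near $0$ when $r<1$). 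So your write-up is more careful than the paper's, but the correct statement of your convexity check is that it holds if and only if $(r-2)q\le0$ or $(r-2)q\ge1$, which genuinely restricts the admissible $(r,q)$ rather than being automatic.
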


\begin{proof}
This follows from applying the inequality~\eqref{e} to the function $f(x)=x^{r}$.
\end{proof}

\begin{theorem}\label{Prop2}
Let $r\in(-\infty,0)\cup[1,\infty)\setminus\{-1\}$ and $b>a>0$. Then, for $p\ge1$,
\begin{equation}\label{l}
| A(a^{r},b^{r})-[L_{r}(a,b)]^{r}| \le\frac{(b-a)^{2}r(r-1)}{2^{2-1/p}} \biggl[\frac{(p+1)a^{(r-2)p} +2b^{(r-2)p}}{(p+1)(p+2)(p+3)}\biggr]^{1/p}.
\end{equation}
\end{theorem}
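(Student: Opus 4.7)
The plan is to apply inequality~\eqref{k} (the $m=\alpha=1$, $n=2$ specialisation of the corollary to Theorem~\ref{th3.3}) to the auxiliary function $f(x)=x^{r}$ on $[a,b]\subset(0,\infty)$, in direct analogy with the proof of Theorem~\ref{Prop1}, where inequality~\eqref{e} was used.

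First I would compute $f''(x)=r(r-1)x^{r-2}$. For the admissible range $r\in(-\infty,0)\cup[1,\infty)\setminus\{-1\}$ the factor $r(r-1)$ is non-negative, so $|f''(x)|=r(r-1)x^{r-2}$ and hence $|f''(x)|^{p}=[r(r-1)]^{p}x^{p(r-2)}$ on $(0,\infty)$. Since a power function $x^{s}$ is convex on $(0,\infty)$ whenever $s(s-1)\ge0$, this quantity is convex on $[a,b]$ for the relevant $(p,r)$, so the convexity hypothesis of Theorem~\ref{th3.3} (with $\alpha=m=1$) is satisfied.

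Next I would identify the means appearing on the left-hand side of~\eqref{k}. Plainly $\frac{f(a)+f(b)}{2}=A(a^{r},b^{r})$, and a direct integration (using $r\ne-1$) gives $\frac{1}{b-a}\int_{a}^{b}x^{r}\td x=\frac{b^{r+1}-a^{r+1}}{(r+1)(b-a)}=[L_{r}(a,b)]^{r}$. Substituting $|f''(a)|^{p}=[r(r-1)]^{p}a^{p(r-2)}$ and $|f''(b)|^{p}=[r(r-1)]^{p}b^{p(r-2)}$ into~\eqref{k} and pulling the common factor $[r(r-1)]^{p}$ out of the bracket (which contributes the factor $r(r-1)$ in front after the $p$-th root) would reproduce the right-hand side of~\eqref{l}.

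The argument is essentially formal substitution and no step is genuinely hard; the only point deserving a moment of care is the convexity of the power $x^{p(r-2)}$ on $[a,b]\subset(0,\infty)$, which is what legitimises invoking the corollary~\eqref{k} in the first place.
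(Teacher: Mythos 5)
Your proposal is correct and follows exactly the paper's proof: the paper likewise obtains \eqref{l} by applying inequality \eqref{k} to $f(x)=x^{r}$, with the same identification $\frac{1}{b-a}\int_a^b x^r\td x=[L_r(a,b)]^r$ and the same factoring out of $[r(r-1)]^p$. One caveat, which applies equally to the paper's own one-line proof: for $r\in(2,2+1/p)$ the function $|f''(x)|^{p}\propto x^{p(r-2)}$ is concave rather than convex on $(0,\infty)$, so the convexity hypothesis needed to invoke \eqref{k} actually fails there; your assertion that convexity holds ``for the relevant $(p,r)$'' glosses over this, just as the paper does.
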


\begin{proof}
This follows from applying the inequality~\eqref{k} to the function $f(x)=x^{r}$.
\end{proof}

\begin{theorem}\label{Prop3}
Let $r\in(-\infty,0)\cup[1,\infty)\setminus\{-1\}$ and $b>a>0$. Then
\begin{equation}\label{n}
| A(a^{r},b^{r})-[L_{r}(a,b)]^{r}| \le\frac{(b-a)^{2}r(r-1)}{24}A\bigl(a^{r-2},b^{r-2}\bigr).
\end{equation}
\end{theorem}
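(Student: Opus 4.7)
The plan is to apply the Hermite--Hadamard type inequality of Corollary~\ref{cor3.1}(3), namely the specialization of Theorem~\ref{th3.1} to $m=\alpha=p=1$ and $n=2$, to the power function $f(x)=x^{r}$ on $[a,b]$, in exact analogy with the derivations of~\eqref{m} and~\eqref{l} from~\eqref{e} and~\eqref{k} respectively.

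First I verify the hypotheses. For $x>0$ one has $f''(x)=r(r-1)x^{r-2}$, and the restriction $r\in(-\infty,0)\cup[1,\infty)$ forces $r(r-1)\ge 0$, so $|f''(x)|=r(r-1)x^{r-2}$. At $m=\alpha=1$ the $(\alpha,m)$-convexity required of $|f''|$ reduces to ordinary convexity on $(0,\infty)$; the power $x^{r-2}$ is convex on $(0,\infty)$ in the pertinent subranges of~$r$, and this is the only technical point worth checking.

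Next, I identify each side of Corollary~\ref{cor3.1}(3) with the special means. Under $f(x)=x^{r}$, the left-hand side becomes
\begin{equation*}
\biggl|\frac{a^{r}+b^{r}}{2}-\frac{1}{b-a}\int_{a}^{b}x^{r}\td x\biggr|
=\biggl|A(a^{r},b^{r})-\frac{b^{r+1}-a^{r+1}}{(r+1)(b-a)}\biggr|
=\bigl|A(a^{r},b^{r})-[L_{r}(a,b)]^{r}\bigr|,
\end{equation*}
where the hypothesis $r\ne -1$ is used in evaluating the integral.

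Finally, the right-hand side of Corollary~\ref{cor3.1}(3) simplifies via
\begin{equation*}
|f''(a)|+|f''(b)|=r(r-1)\,(a^{r-2}+b^{r-2})=2r(r-1)\,A(a^{r-2},b^{r-2}),
\end{equation*}
and substitution immediately delivers the bound~\eqref{n}. There is essentially no obstacle here: once the correct specialization of Corollary~\ref{cor3.1} is selected the argument reduces to a direct substitution, and the only delicate point is ensuring that $|f''|$ is convex on the admitted range of~$r$.
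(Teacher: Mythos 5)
You follow exactly the paper's route---Corollary~\ref{cor3.1}(3) at $m=\alpha=p=1$, $n=2$ (which coincides with~\eqref{k} at $p=1$) applied to $f(x)=x^r$---but your final step fails, and the failure is worth spelling out. With $|f''(x)|=r(r-1)x^{r-2}$, the corollary gives
\begin{equation*}
\bigl|A(a^{r},b^{r})-[L_{r}(a,b)]^{r}\bigr|
\le\frac{(b-a)^{2}}{24}\bigl[|f''(a)|+|f''(b)|\bigr]
=\frac{(b-a)^{2}r(r-1)}{24}\bigl(a^{r-2}+b^{r-2}\bigr)
=\frac{(b-a)^{2}r(r-1)}{12}A\bigl(a^{r-2},b^{r-2}\bigr),
\end{equation*}
since $a^{r-2}+b^{r-2}=2A(a^{r-2},b^{r-2})$. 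You wrote this very identity, but then dropped the factor $2$ when claiming that substitution ``immediately delivers'' \eqref{n}: what you actually obtain has $12$ in the denominator, not $24$. Indeed \eqref{n} as printed cannot be proved by any argument, because it is false: for $r=2$ the left-hand side equals $\frac{a^{2}+b^{2}}{2}-\frac{a^{2}+ab+b^{2}}{3}=\frac{(b-a)^{2}}{6}$, whereas the right-hand side of~\eqref{n} is $\frac{(b-a)^{2}\cdot2}{24}A(1,1)=\frac{(b-a)^{2}}{12}$. This defect is inherited from the paper itself, whose one-line proof via~\eqref{k} at $p=1$ produces the same constant $\frac{1}{12}$; the statement should read either $\frac{(b-a)^{2}r(r-1)}{12}A(a^{r-2},b^{r-2})$ or $\frac{(b-a)^{2}r(r-1)}{24}(a^{r-2}+b^{r-2})$.

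A second overstatement concerns the hypothesis check you flag as ``the only technical point worth checking.'' The function $x^{s}$ is convex on $(0,\infty)$ precisely when $s\le0$ or $s\ge1$, so $|f''(x)|=r(r-1)x^{r-2}$ is convex only for $r\le2$ or $r\ge3$; for $r\in(2,3)$ it is strictly concave and the convexity hypothesis of Corollary~\ref{cor3.1}(3) fails on part of the stated range $r\in(-\infty,0)\cup[1,\infty)\setminus\{-1\}$. (The paper's statement shares this flaw as well.) A correct version of the theorem thus requires both the constant to be repaired and the range of $r$ restricted, for instance to $\bigl((-\infty,0)\cup[1,2]\cup[3,\infty)\bigr)\setminus\{-1\}$.
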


\begin{proof}
This follows from applying the inequality~\eqref{k} for $p=1$ to the function $f(x)=x^{r}$.
\end{proof}

\begin{theorem}\label{Prop4}
Let $b>a>0$. Then for $p,q>1$ we have
\begin{equation}\label{o}
\biggl| \frac1{H(a,b)}-\frac1{L(a,b)}\biggr|
\le\frac{(b-a)^{2}}{(p+1)^{1/p}(q+2)^{1/q}}\biggl[\frac1{a^{3q}}+\frac1{(q+1)b^{3q}}\biggr]^{1/q},
\end{equation}
where $\frac1p+\frac1q=1$.
\end{theorem}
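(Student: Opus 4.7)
The strategy is to apply inequality~\eqref{e}, which is the $m=\alpha=1$, $n=2$ specialization of Theorem~\ref{th3.2}, to the test function $f(x)=1/x$ on $[a,b]\subset(0,\infty)$. This parallels the pattern the authors already use to derive Theorems~\ref{Prop1}--\ref{Prop3} from the power function $f(x)=x^{r}$, so the route to~\eqref{o} is essentially a single substitution followed by bookkeeping.

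First I would verify the hypothesis on $f(x)=1/x$: it is infinitely differentiable on $(0,\infty)$, with $f''(x)=2/x^{3}$, and $|f''(x)|^{q}=2^{q}/x^{3q}$ is convex on $[a,b]$ because $x\mapsto x^{-3q}$ has positive second derivative for $x>0$. Therefore the $(1,1)$-convexity of $|f''|^{q}$ required by~\eqref{e} is satisfied. Next I would identify the left-hand side of~\eqref{e} as the quantity of interest: direct computation gives
\[
\frac{f(a)+f(b)}{2}=\frac{1}{2}\biggl(\frac{1}{a}+\frac{1}{b}\biggr)=\frac{1}{H(a,b)},\qquad
\frac{1}{b-a}\int_{a}^{b}\frac{\td x}{x}=\frac{\ln b-\ln a}{b-a}=\frac{1}{L(a,b)},
\]
so the left-hand side of~\eqref{e} becomes exactly $\bigl|1/H(a,b)-1/L(a,b)\bigr|$, matching the left-hand side of~\eqref{o}.

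Finally I would substitute $|f''(a)|^{q}=2^{q}/a^{3q}$ and $|f''(b)|^{q}=2^{q}/b^{3q}$ into the right-hand side of~\eqref{e}. The constant $2^{q}$ factors out of the bracket $[(q+1)|f''(a)|^{q}+|f''(b)|^{q}]/(q+1)$ and, upon taking the $q$-th root, contributes a factor of $2$ that cancels the leading $\tfrac12$. Dividing numerator and denominator inside the bracket by $q+1$ rearranges the expression to
\[
\frac{(b-a)^{2}}{(p+1)^{1/p}(q+2)^{1/q}}\biggl[\frac{1}{a^{3q}}+\frac{1}{(q+1)b^{3q}}\biggr]^{1/q},
\]
which is precisely the right-hand side of~\eqref{o}. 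I do not expect any substantive obstacle; the only delicate point is tracking the constants $2^{q}$ and the factor $q+1$ when simplifying the bracket, a routine computation analogous to those in the proofs of Theorems~\ref{Prop1} and~\ref{Prop2}.
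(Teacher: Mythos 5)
Your proposal is correct and is exactly the paper's proof: the authors also obtain~\eqref{o} by applying the inequality~\eqref{e} to $f(x)=1/x$, and your identification of $\frac{f(a)+f(b)}{2}=\frac1{H(a,b)}$, $\frac1{b-a}\int_a^b\frac{\td x}{x}=\frac1{L(a,b)}$, together with the cancellation of the factor $2$ coming from $|f''(x)|^q=2^q/x^{3q}$ against the leading $\frac12$, is precisely the bookkeeping the paper leaves implicit.
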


\begin{proof}
This follows from applying the inequality~\eqref{e} to the function $f(x)=\frac{1}{x}$.
\end{proof}

\begin{theorem}\label{Prop5}
Let $b>a>0$. Then for $p\ge1$ we have
\begin{equation}
\biggl| \frac1{H(a,b)}-\frac1{L(a,b)}\biggr|
\le\frac{(b-a)^{2}r(r-1)}{2^{1-1/p}[(p+2)(p+3)]^{1/p}} \biggl[\frac1{a^{3p}}+\frac2{(p+1)b^{3p}}\biggr]^{1/p}.
\end{equation}
\end{theorem}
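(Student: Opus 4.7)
The plan is to apply the inequality~\eqref{k} (from the corollary following Theorem~\ref{th3.3}) to the specific choice $f(x)=1/x$, exactly in parallel with how Theorem~\ref{Prop4} was deduced from~\eqref{e}. So the first step is to verify that $f(x)=1/x$ satisfies the hypotheses on the interval $[a,b]$ with $b>a>0$: it is twice differentiable, and $|f''(x)|^p=(2/x^3)^p$ is $(\alpha,m)$-convex with $\alpha=m=1$ (indeed convex) on this interval, so inequality~\eqref{k} is applicable.

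Next I would compute the three ingredients on the left-hand side of~\eqref{k}. Since $f(a)+f(b)=1/a+1/b$, one has
\begin{equation*}
\frac{f(a)+f(b)}{2}=\frac{1}{2}\biggl(\frac1a+\frac1b\biggr)=\frac{1}{H(a,b)},
\end{equation*}
and since $\int_a^b\frac{\td x}{x}=\ln b-\ln a$, one gets
\begin{equation*}
\frac{1}{b-a}\int_a^b\frac{\td x}{x}=\frac{\ln b-\ln a}{b-a}=\frac{1}{L(a,b)}.
\end{equation*}
Therefore the left-hand side of~\eqref{k} becomes exactly $\bigl|1/H(a,b)-1/L(a,b)\bigr|$.

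For the right-hand side I would use $f''(x)=2/x^3$, so $|f''(a)|^p=2^p/a^{3p}$ and $|f''(b)|^p=2^p/b^{3p}$. Substituting into~\eqref{k} gives
\begin{equation*}
\frac{(b-a)^2}{2^{2-1/p}}\biggl[\frac{(p+1)\cdot 2^p/a^{3p}+2\cdot 2^p/b^{3p}}{(p+1)(p+2)(p+3)}\biggr]^{1/p},
\end{equation*}
and pulling the factor $2^p$ out of the bracket and then extracting its $1/p$-th root (which is $2$) and absorbing it into the power of $2$ in the denominator reduces $2^{2-1/p}$ to $2^{1-1/p}$; simultaneously dividing the numerator by $(p+1)$ puts the $1/(p+1)$ factor on the second summand only, which matches the stated right-hand side.

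The proof itself is a one-line reduction, so there is no genuine obstacle; the only thing to watch is the bookkeeping of the constants under the $1/p$-th root and to note that the factor $r(r-1)$ appearing in the statement is a vestigial typo inherited from Theorem~\ref{Prop2} and should be read as $1$ when $f(x)=1/x$.
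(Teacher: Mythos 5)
Your proof is correct and follows exactly the paper's (one-line) route: the paper's own proof applies inequality~\eqref{k} to ``$f(x)=x^r$,'' which---since the left-hand side involves $1/H(a,b)$ and $1/L(a,b)$---can only mean $r=-1$, i.e., $f(x)=1/x$, precisely your choice. You also correctly diagnosed the typo: the factor $r(r-1)$ in the statement is vestigial (carried over from Theorems~\ref{Prop2} and~\ref{Prop3}), and your bookkeeping---extracting $2^p$ from $|f''(x)|^p=2^p x^{-3p}$, whose $p$-th root turns $2^{2-1/p}$ into $2^{1-1/p}$, and moving $1/(p+1)$ onto the second summand---shows the stated right-hand side is exact once $r(r-1)$ is read as $1$.
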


\begin{proof}
This follows from the inequality~\eqref{k} to the function $f(x)=x^{r}$.
\end{proof}

\begin{theorem}\label{Prop6}
Let $b>a>0$. Then we have
\begin{equation}\label{r}
\ln \frac{I(a,b)}{G(a,b)}
\le\frac{(b-a)^{2}}{24}A\biggl(\frac1{a^{2}},\frac1{b^{2}}\biggr).
\end{equation}
\end{theorem}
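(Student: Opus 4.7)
The plan is to specialize the Hermite--Hadamard-type inequality \eqref{k} (equivalently, the $p=1$ case of Corollary~3.1(3)) to the test function $f(x) = -\ln x$ on $[a,b]$, in direct analogy with the proofs of Theorems~\ref{Prop1}--\ref{Prop5}. This choice of $f$ is dictated by the means that must appear on both sides: the exponential mean $I(a,b)$ comes from the integral of $\ln x$, the geometric mean $G(a,b)$ comes from the arithmetic average of $\ln a$ and $\ln b$, and the arithmetic mean $A(1/a^{2},1/b^{2})$ comes from the second derivative of $\ln x$.

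The hypotheses are immediate: since $f''(x)=1/x^{2}>0$ on $(0,\infty)$, the function $|f''(x)|^{p}=1/x^{2p}$ is itself convex there, and hence $(\alpha,m)$-convex on $[a,b]$ in the required case $\alpha=m=1$. Using the antiderivative identity $\int \ln x\, \td x = x \ln x - x$, one computes
\begin{equation*}
\frac{1}{b-a}\int_{a}^{b}\ln x\, \td x = \frac{b\ln b - a\ln a}{b-a} - 1 = \ln I(a,b),
\end{equation*}
while $\frac{f(a)+f(b)}{2} = -\frac{1}{2}\ln(ab) = -\ln G(a,b)$. Consequently the left-hand side of \eqref{k} equals $|\ln(I(a,b)/G(a,b))|$, and the classical mean inequality $G(a,b)\le I(a,b)$ permits dropping the absolute value, recovering the left side of \eqref{r}. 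On the right-hand side, $|f''(a)|+|f''(b)| = 1/a^{2}+1/b^{2} = 2A(1/a^{2},1/b^{2})$, which packages the bound into the form asserted in \eqref{r}.

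No step is genuinely difficult; the only place one must pause is in recognizing $\frac{1}{b-a}\int_{a}^{b}\ln x\,\td x$ as $\ln I(a,b)$, which follows by direct integration and comparison with the definition of $I(a,b)$. Once that identification is made, the theorem reduces to direct substitution into \eqref{k} with $p=1$.
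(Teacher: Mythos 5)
Your approach---specializing \eqref{k} at $p=1$ to $f(x)=-\ln x$---is exactly the paper's own route, and your identifications on the left-hand side are correct: $\frac{1}{b-a}\int_a^b\ln x\,\td x=\ln I(a,b)$ and $\frac{f(a)+f(b)}{2}=-\ln G(a,b)$, so the left side of \eqref{k} becomes $\ln\bigl(I(a,b)/G(a,b)\bigr)\ge0$. The gap is in your final ``packaging'' step. Inequality \eqref{k} at $p=1$ reads $\bigl|\frac{f(a)+f(b)}{2}-\frac{1}{b-a}\int_a^bf(x)\,\td x\bigr|\le\frac{(b-a)^2}{2}\cdot\frac{2|f''(a)|+2|f''(b)|}{2\cdot3\cdot4}=\frac{(b-a)^2}{24}\bigl(|f''(a)|+|f''(b)|\bigr)$; substituting $|f''(a)|+|f''(b)|=1/a^2+1/b^2=2A(1/a^2,1/b^2)$ gives the bound $\frac{(b-a)^2}{12}A\bigl(1/a^2,1/b^2\bigr)$, which is \emph{twice} the right-hand side of \eqref{r}. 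So what your argument actually proves is $\ln\frac{I(a,b)}{G(a,b)}\le\frac{(b-a)^2}{12}A\bigl(\frac{1}{a^2},\frac{1}{b^2}\bigr)$, not \eqref{r}.

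This factor of $2$ cannot be recovered by any refinement, because \eqref{r} as stated is false. Take $a=1$, $b=2$: the left side is $\ln(4/e)-\ln\sqrt{2}\approx0.0397$, while the right side is $\frac{1}{24}\cdot\frac{1+1/4}{2}=\frac{5}{192}\approx0.0260$. (The paper's own one-line proof commits the identical error, and the same defect afflicts Theorem~\ref{Prop3}: for $r=2$ its left side equals $(b-a)^2/6$ while its right side equals $(b-a)^2/12$.) The correct statement obtainable by your method keeps the constant $\frac{(b-a)^2}{24}$ only if the arithmetic mean $A\bigl(\frac{1}{a^2},\frac{1}{b^2}\bigr)$ is replaced by the sum $\frac{1}{a^2}+\frac{1}{b^2}$, or equivalently replaces $\frac{(b-a)^2}{24}$ by $\frac{(b-a)^2}{12}$ in front of the mean.
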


\begin{proof}
This follows from applying the inequality~\eqref{k} for $p=1$ to the function $f(x)=-\ln x$.
\end{proof}

\begin{remark}
This paper is a combined version of the preprints~\cite{n-times-diferentiable-functions_m-convex.tex, try-too-to-arxiv.tex}.
\end{remark}

\subsection*{Acknowledgements}
The authors would like to thank Professors Bo-Yan Xi and Shu-Hong Wang at Inner Mongolia University for Nationalities in China for their helpful corrections to and valuable comments on the original version of this paper.

\end{document}